\theoremstyle{plain}
\begin{document}

\def\a{\alpha}
 \def\b{\beta}
 \def\e{\epsilon}
 \def\d{\delta}
  \def\D{\Delta}
 \def\c{\chi}
 \def\k{\kappa}
 \def\g{\gamma}
 \def\Ind{\mathrm{Ind}}
 \def\t{\tau}
\def\ti{\tilde}
 \def\N{\mathbb N}
 \def\Q{\mathbb Q}
 \def\Z{\mathbb Z}
 \def\C{\mathbb C}
 \def\F{\mathbb F}
 \def\ovF{\overline\F}
 \def\bfN{\mathbf N}
 \def\cG{\mathcal G}
 \def\cT{\mathcal T}
 \def\cX{\mathcal X}
 \def\cY{\mathcal Y}
 \def\cC{\mathcal C}
 \def\cD{\mathcal D}
 \def\cZ{\mathcal Z}
 \def\cO{\mathcal O}
 \def\cW{\mathcal W}
 \def\cL{\mathcal L}
 \def\bfC{\mathbf C}
 \def\bfZ{\mathbf Z}
 \def\bfO{\mathbf O}
 \def\G{\Gamma}
 \def\bO{\boldsymbol{\Omega}}
 \def\bgo{\boldsymbol{\omega}}
 \def\go{\rightarrow}
 \def\do{\downarrow}
 \def\ra{\rangle}
 \def\la{\langle}
 \def\fix{{\rm fix}}
 \def\ind{{\rm ind}}
 \def\rfix{{\rm rfix}}
 \def\diam{{\rm diam}}
 \def\uni{{\rm uni}}
 \def\diag{{\rm diag}}
 \def\Irr{{\rm Irr}}
 \def\Syl{{\rm Syl}}
 \def\Out{{\rm Out}}
 \def\Tr{{\rm Tr}}
 \def\M{{\cal M}}
 \def\cE{{\mathcal E}}
\def\td{\tilde\delta}
\def\tx{\tilde\xi}
\def\DC{D^\circ}
\def\ext{{\rm Ext}}
\def\Ker{{\rm Ker}}
\def\hom{{\rm Hom}}
\def\End{{\rm End}}
 \def\rank{{\rm rank}}
 \def\soc{{\rm soc}}
 \def\Cl{{\rm Cl}}
 \def\A{{\sf A}}
 \def\sP{{\sf P}}
 \def\sQ{{\sf Q}}
 \def\SSS{{\sf S}}
  \def\SQ{{\SSS^2}}
 \def\St{{\sf {St}}}
 \def\p{\ell}
 \def\ps{\ell^*}
 \def\SC{{\rm sc}}
 \def\supp{{\sf{supp}}}
  \def\cR{{\mathcal R}}
 \newcommand{\tw}[1]{{}^#1}

\def\Der{{\rm Der}}
 \def\Sym{{\rm Sym}}
 \def\PSL{{\rm PSL}}
 \def\SL{{\rm SL}}
 \def\Sp{{\rm Sp}}
 \def\GL{{\rm GL}}
 \def\SU{{\rm SU}}
 \def\GU{{\rm GU}}
 \def\SO{{\rm SO}}
 \def\PO{{\rm P}\Omega}
 \def\Spin{{\rm Spin}}
 \def\PSp{{\rm PSp}}
 \def\PSU{{\rm PSU}}
 \def\PGL{{\rm PGL}}
 \def\PGU{{\rm PGU}}
 \def\Iso{{\rm Iso}}
 \def\Stab{{\rm Stab}}
 \def\GO{{\rm GO}}
 \def\Ext{{\rm Ext}}
 \def\E{{\cal E}}
 \def\l{\lambda}
 \def\ve{\varepsilon}
 \def\Lie{\rm Lie}
 \def\s{\sigma}
 \def\O{\Omega}
 \def\o{\omega}
 \def\ot{\otimes}
 \def\op{\oplus}
 \def\oc{\overline{\chi}}
 \def\pf{\noindent {\bf Proof.$\;$ }}
 \def\Proof{{\it Proof. }$\;\;$}
 \def\no{\noindent}
\def\hal{\unskip\nobreak\hfil\penalty50\hskip10pt\hbox{}\nobreak
 \hfill\vrule height 5pt width 6pt depth 1pt\par\vskip 2mm}

 \renewcommand{\thefootnote}{}

\newtheorem{theorem}{Theorem}
 \newtheorem{thm}{Theorem}[section]
 \newtheorem{prop}[thm]{Proposition}
 \newtheorem{conj}[thm]{Conjecture}
 \newtheorem{question}[thm]{Question}
 \newtheorem{lem}[thm]{Lemma}
 \newtheorem{lemma}[thm]{Lemma}
 \newtheorem{defn}[thm]{Definition}
 \newtheorem{cor}[thm]{Corollary}
 \newtheorem{coroll}[theorem]{Corollary}
\newtheorem*{corB}{Corollary}
 \newtheorem{rem}[thm]{Remark}
 \newtheorem{exa}[thm]{Example}
 \newtheorem{cla}[thm]{Claim}

\numberwithin{equation}{section}
\parskip 1mm

\title[Brauer-Fowler]{Variants of some of the Brauer-Fowler Theorems}

\author[Guralnick]{Robert M. Guralnick}
\address{Department of Mathematics, University of Southern California, Los Angeles,
CA 90089-2532, USA}
\email{guralnic@usc.edu}
 
\author[Robinson]{Geoffrey R. Robinson}
\address{Department of Mathematics, King's College, 
Aberdeen,  AB24 3FX, UK}
\email{g.r.robinson@abdn.ac.uk}

 
\date{\today}

\thanks{The first author was partially supported by the NSF
grant DMS-1600056.
The paper is partially based upon work 
supported by the NSF under grant DMS-1440140 while the first author was in residence at 
the Mathematical Sciences Research Institute in Berkeley, California, during the Spring 2018
semester. It is a pleasure to thank the Institute for support, hospitality, and stimulating environment. Both authors are grateful to G. Malle for careful reading of earlier versions of this manuscript and helpful comments.}



\begin{abstract}  Brauer and Fowler noted restrictions on the structure of a finite group $G$
in terms of $|C_G(t)|$ for an involution $t \in G$.  We consider variants of these themes.   
We first note that for an arbitrary finite group $G$ of even order,
we have $$|G|< k(F)|C_{G}(t)|^{4}$$ for each involution $t \in G,$ where $F$ denotes the 
Fitting subgroup of $G$ and $k(F)$ denotes the number of conjugacy classes of $F$. In particular, 
for such a group $G$ we have $$[G:F(G)] < |C_{G}(t)|^{4}$$ for each involution $t \in G.$ This result requires the classification of the finite simple groups.

The groups ${\rm SL}(2,2^{n})$ illustrate that the above exponent $4$ cannot be replaced by any exponent less than $3.$ 
We do not know at present whether the exponent $4$ can be improved in general, though we note that the exponent $3$ suffices when $G$ is almost simple.

We are however able to prove that every finite group $G$ of even order contains an involution $u$ with  $$[G:F(G)] < |C_{G}(u)|^{3}.$$  The proof of this fact reduces to proving two residual cases: one in which $G$ is almost simple (where the classification of  the finite simple groups is needed) and one when $G$ has a Sylow $2$-subgroup of order $2$. For the latter result, the classification of finite simple groups is not needed (though the Feit-Thompson odd order theorem is).

We also prove a very general result on fixed point spaces of involutions in finite irreducible linear groups which does not make use of the classification of the finite simple groups, and some other results on the existence of non-central elements (not necessarily involutions) with large centralizers in general finite groups.

Lastly we prove (without using the classification of finite simple groups) that if $G$ is a finite group and $t \in G$ is an involution, then all prime divisors of $[G:F(G)]$ are less than or equal to $|C_{G}(t)|+1.$ 

\end{abstract}

\maketitle

\section{Introduction}   

Let $G$ be a finite group.  In \cite{GR}, the authors studied the commuting probability (i.e. the probability that two random elements commute) of $G,$ denoted here by ${\rm cp}(G).$ We have ${\rm cp}(G) = k(G)/|G|,$ 
where $k(G)$ is the number of conjugacy classes of $G$.   

Throughout this paper, an involution $t$ of a finite group $G$ will mean an element of order two in $G$.
In \cite{GR} it is noted that whenever $t$ is an involution of $G,$ we have 
$$|G| <  k(G)|C_{G}(t)|^{2},$$  (see the proof of Theorem 14 of \cite{GR}, which also implicitly yields that 
$$|I(G)| < \sqrt{k(G)|G|},$$ where $I(G)$ is the set of all involutions of $G$ ).
It is also proved in \cite[Theorem 10]{GR} that 
$$k(G) \leq \sqrt{|G|}\sqrt{k(F)},$$ where 
$F = F(G)$ is the Fitting subgroup of $G$.  Comparing these expressions easily yields $\sqrt{|G|} < \sqrt{k(F)}|C_{G}(t)|^{2}$ for every involution $t \in G$ and that, more precisely, we have $$|I(G)| < |G|^{\frac{3}{4}}k(F)^{\frac{1}{4}}.$$
Hence, in particular, we do have $$|G| < k(F)|C_{G}(t)|^{4} \leq |F(G)||C_{G}(t)|^{4}$$ for each involution $t \in G,$ so we have proved:

\begin{theorem}\label{result 1}    Let $G$ be a finite group of even order  and let $t$ be an involution of $G$.  Then $$[G:F(G)] < |C_{G}(t)|^{4}.$$
\end{theorem}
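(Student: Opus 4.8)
The plan is to deduce Theorem~\ref{result 1} from two facts already recorded above, both drawn from \cite{GR}. The first is the Brauer--Fowler style inequality $|G| < k(G)\,|C_G(t)|^{2}$, valid for every involution $t$ of $G$; the second is the class-number estimate $k(G) \le \sqrt{|G|}\,\sqrt{k(F)}$ of \cite[Theorem~10]{GR}, where $F = F(G)$. Granting these, one substitutes the second bound into the first to get $|G| < \sqrt{|G|}\,\sqrt{k(F)}\,|C_G(t)|^{2}$, hence $\sqrt{|G|} < \sqrt{k(F)}\,|C_G(t)|^{2}$; squaring and using the trivial inequality $k(F) \le |F(G)|$ gives $|G| < |F(G)|\,|C_G(t)|^{4}$, and dividing by $|F(G)|$ yields $[G:F(G)] < |C_G(t)|^{4}$. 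The whole argument is purely arithmetic once the two inputs are in hand, so in particular it requires nothing beyond \cite{GR}; no appeal to the classification of finite simple groups is needed here.

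It remains to recall how the two inputs arise. For the first, fix an involution $t$, put $T = t^{G}$, and for $g \in G$ let $m(g)$ be the number of ordered pairs $(s,s') \in T \times T$ with $ss' = g$. Then $\sum_{g} m(g) = |T|^{2}$ and $m(1) = |T|$. If $ss' = g$ then $s$ inverts $g$, and the set of involutions inverting a fixed $g$ lies in a single coset of $C_G(g)$, so $m(g) \le |C_G(g)|$ for every $g$; grouping $\sum_{g \ne 1} m(g)$ into the $k(G)-1$ nontrivial conjugacy classes, each of which contributes at most $|C_G(g)|\cdot|g^{G}| = |G|$, we obtain $|T|^{2} \le |T| + (k(G)-1)|G|$. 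Since $|T| = |G|/|C_G(t)|$ and $|C_G(t)| \ge 2$, this rearranges to $|G| < k(G)\,|C_G(t)|^{2}$; this is exactly the computation in the proof of \cite[Theorem~14]{GR}, which I take as given.

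The second input, $k(G) \le \sqrt{|G|}\,\sqrt{k(F)}$, is the structural heart of the matter: it measures how the Fitting subgroup constrains the number of conjugacy classes of $G$, and is precisely \cite[Theorem~10]{GR}. Were one proving everything from scratch, this is where the genuine difficulty would lie; here it is available off the shelf, so the proof of Theorem~\ref{result 1} collapses to the one-line combination above. The only real ``obstacle'', such as it is, is recognising that one should feed the class-number bound into the involution-centralizer bound (and then pass from $k(F)$ to $|F(G)|$) rather than try to estimate $[G:F(G)]$ directly.
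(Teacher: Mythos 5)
Your argument is correct and is essentially the paper's own proof: the authors likewise combine $|G| < k(G)\,|C_G(t)|^2$ (from the proof of \cite[Theorem~14]{GR}) with $k(G) \le \sqrt{|G|}\,\sqrt{k(F)}$ (\cite[Theorem~10]{GR}) to get $|G| < k(F)\,|C_G(t)|^4 \le |F(G)|\,|C_G(t)|^4$. One caveat: your remark that no appeal to the classification of finite simple groups is needed applies only to the arithmetic combination step --- the input $k(G) \le \sqrt{|G|}\,\sqrt{k(F)}$ itself depends on the classification, which is why the paper states that Theorem~\ref{result 1} requires it.
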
 

By considering the groups $G = {\rm SL}(2,2^a)$, we see that the exponent $4$ in the Theorem cannot be improved to anything less than $3$.  
We do not know whether the result is true with the exponent $4$ replaced by $3$.  
Also, the examples $G = {\rm PSL}(2,7)$ and $G = M_{11}$ show that the bounds  $$[G:F(G)] < |C_{G}(x)|^{4}$$ and 
$$[G:F(G)] < |C_{G}(y)|^{5}$$ fail respectively for every element $x$ of order $3$ and  
every element $y$ of order $5$.  In fact, the alternating groups $A_{p+2}$ ($p$ an odd prime) show that there is no fixed integer $m$ 
such that whenever $G$ is a finite group of order divisible by a prime $p,$ then we have  
$$[G:F(G)] < |C_{G}(x)|^{m}$$ for some element $x \in G$ of order $p,$ since $A_{p+2}$ has a self-centralizing 
Sylow $p$-subgroup of order $p.$ For any fixed integer $m$, we may choose a sufficiently large prime $p$ with 
$(p+2)! > 2p^{m}.$

  We do prove:

\begin{theorem} \label{result 2}  Let $G$ be a finite group of even order.  Then there exists an involution $t \in G$ with
$$[G:F(G)] < |C_G(t)|^3.$$
\end{theorem}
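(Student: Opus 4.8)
The plan is to induct on $|G|$: take $G$ a counterexample of least order and reduce the statement to two residual situations, namely $G$ almost simple and $G$ with a Sylow $2$-subgroup of order $2$. Throughout the reduction the delicate point is to preserve the exponent $3$, rather than the exponent $4$ supplied by Theorem~\ref{result 1}.

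The reduction passes to proper quotients by minimal normal subgroups. Suppose $M\trianglelefteq G$ is minimal normal of odd order, so $M$ is an elementary abelian $p$-group with $p$ odd and $M\le F(G)$. In $\bar G=G/M$ every involution lifts to an involution of $G$ by Schur--Zassenhaus, and for such a lift $t$ coprime action gives $C_{\bar G}(tM)=C_G(t)M/M$, hence $|C_{\bar G}(tM)|=|C_G(t)|/|C_M(t)|$, while $F(\bar G)\supseteq F(G)/M$. If the Fitting subgroup does not grow, i.e. $F(\bar G)=F(G)/M$, then $[\bar G:F(\bar G)]=[G:F(G)]$ and the inductive hypothesis for $\bar G$ finishes the case at once; otherwise one must absorb the factor $[F(\bar G):F(G)/M]$ into $|C_M(t)|^3$, and the configurations where this fails---roughly those in which the relevant involution inverts $M$ while the quotient acquires new nilpotent normal subgroups---I would treat directly. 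When $O_2(G)\neq1$ I would argue separately, using that the involutions in $Z(O_2(G))$ lie in a normal subgroup of $G$ and so have small conjugacy classes; and when the generalized Fitting subgroup has a component, I would pass to an almost simple quotient, or, if there is more than one component, conclude by a crude order estimate via a bound on $|\Out(S)|$. What remains is then $G$ almost simple or $|G|_2=2$.

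For $G$ almost simple the classification of the finite simple groups is invoked. Here $F(G)=1$, $S\trianglelefteq G\le{\rm Aut}(S)$ for a non-abelian simple $S$, and $[G:F(G)]=|G|\le|S|\,|\Out(S)|$; a family-by-family inspection of the simple groups, bounding $|\Out(S)|$ and selecting an involution $t$ with $[S:C_S(t)]$ controlled, yields $|S|\,|\Out(S)|<|C_G(t)|^3$. This sharpens the classical Brauer--Fowler estimate and is, as the introduction notes, the step that cannot avoid the classification.

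For $|G|_2=2$, a Sylow $2$-subgroup $P=\langle t\rangle$ satisfies $N_G(P)=C_G(P)$, so Burnside's normal $2$-complement theorem furnishes a normal $2$-complement $N$ of odd order, whence $G=N\rtimes\langle t\rangle$ is solvable by the Feit--Thompson theorem; the classification is not needed here. We may assume $t$ is not central, so $O_2(G)=1$, $F(G)=F(N)$, and the inequality reduces to $[N:F(N)]<4\,|C_N(t)|^3$ for the involutory automorphism $t$ of the odd-order solvable group $N$. I would prove this by induction along the Fitting series of $N$, using coprime action (so that $N=C_N(t)[N,t]$ and centralizers pass well to the terms of the series), Hall subgroups, and the faithful action of $N/F(N)$ on $F(N)/\Phi(F(N))$. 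The main obstacles are the two exponent-sensitive spots: the degenerate subcase of the reduction just mentioned, where the usable involutions have small fixed space on the relevant minimal normal subgroup, and the solvable bound $[N:F(N)]<4\,|C_N(t)|^3$, which must be obtained with exponent $3$ even though the counting behind Theorem~\ref{result 1} gives only exponent $4$.
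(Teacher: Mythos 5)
Your two residual cases (almost simple, and Sylow $2$-subgroup of order $2$) are exactly the paper's, and the almost simple step is handled the same way, by classification. But at both of the places you yourself flag as ``exponent-sensitive'' the proposal stops short of the ideas that actually carry the exponent $3$, so there are genuine gaps. First, the reduction. Passing to $\bar G=G/M$ for $M$ minimal normal of odd order founders precisely when $F(\bar G)>F(G)/M$, and ``treating those configurations directly'' is not a side case --- it is the whole problem, since for a solvable $G$ with trivial Frattini subgroup the Fitting subgroup grows at every such step. The paper's reduction is organized differently: first, if $G$ has more than one class of involutions, the Brauer--Fowler counting argument (Lemma \ref{more than 1}) gives $|G|<|C_G(u)|^3$ outright; for a single class one shows $O_2(G)=1$, and then the dichotomy is on the Sylow $2$-structure. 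If $G$ contains a Klein four-group, the Brauer--Wielandt formula (Lemma \ref{bw}) applied to its action on $N=O_{2'}(G)$ yields $|N|\le|C_N(t)|^3$, which multiplies against the inductive bound for $G/N$; if not, Brauer--Suzuki forces the normal $2$-complement case with $|G|_2=2$. None of Brauer--Fowler, Brauer--Wielandt or Brauer--Suzuki appears in your sketch, and without them the factor you need to ``absorb'' has no source.

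Second, the solvable case. The inequality $[N:F(N)]\le|C_N(t)|^3$ for an involutory automorphism $t$ of an odd-order group $N$ (Theorem \ref{odd case}) cannot be extracted from coprime action, Hall subgroups and the faithful action of $N/F(N)$ on $F(N)/\Phi(F(N))$ alone. The induction along the Fitting series needs, at its base, a lower bound for the fixed points of $t$ on each minimal normal subgroup $V$ of the form $\dim C_V(t)\ge\tfrac{1}{3}\dim V$, equivalently $|F|<|C_F(t)|^3$; this is Theorem \ref{linear fix}, a separate theorem about irreducible linear groups whose proof goes through primitivity reductions, extraspecial normal subgroups and the Glauberman correspondence. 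It also needs the regular-orbit estimate $[F_2(N):F(N)]\le\tfrac{|F(N)|-1}{2}$ (Lemma \ref{spare reg}) and, for Fitting height two, the bound $k(G)\le|F(G)|$ from Lemma \ref{Fitting height two Hall} combined with $|G|<k(G)|C_G(t)|^2$. Your plan names the target but supplies none of these inputs, and the naive estimates available from coprime action give only the exponent $4$ you are trying to beat.
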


A critical case that arises in the proof yields the next result which may be of independent interest. The key case is when $N = [N,\alpha],$ in which case the statement simplifies somewhat.


\begin{theorem} \label{odd case}
Suppose that $N$ is a finite group of odd order and $\alpha$ is an automorphism of $N$ of order $2$.
Then $[N:F(N)] \leq |C_{N}(\alpha)|^{3}.$ In fact, if $M = [N, \alpha]$ is nilpotent, we have 
$$[N:F(N)] \leq |C_{N}(\alpha)|,$$  while if $M$ has Fitting height $h \geq 2,$ then we have 
$$[N:F(N)] < \frac{|C_{F_{h-2}(M)}(\alpha)||C_{M}(\alpha)||C_{N}(\alpha)|}{2^{h-3}} \leq \frac{2}{3}\left( \frac{|C_{N}(\alpha)|^{3}}{2^{h-2}}\right).$$
\end{theorem}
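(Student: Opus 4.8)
The plan is to induct on the Fitting height $h$ of $M = [N,\alpha]$, using the classical theory of coprime automorphism action (here $\alpha$ has order $2$ and $N$ has odd order, so $(|N|,|\alpha|)=1$). The key facts I would invoke repeatedly are: (i) for a coprime automorphism $\alpha$ acting on a solvable group $K$, the Fitting series is $\alpha$-invariant and $\alpha$ acts on each quotient $F_{i}(K)/F_{i-1}(K)$; (ii) if $\alpha$ acts coprimely on $K$ and $[K,\alpha]=K$ then $C_{K}(\alpha)$ controls $K$ in strong ways — in particular $C_{K/L}(\alpha) = C_{K}(\alpha)L/L$ for any $\alpha$-invariant normal $L$; and (iii) Glauberman's lemma / Thompson's $A\times B$-type arguments giving that a nontrivial coprime action on a nontrivial group has nontrivial fixed points, and more quantitatively, the ``$\tfrac14$'' or ``$\tfrac12$'' type bounds on the proportion of a section moved by an involutory automorphism. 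Since $N$ is solvable (odd order, Feit–Thompson), $M$ is solvable, so the Fitting series reaches the whole group.

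First I would dispose of the base case: if $M=[N,\alpha]$ is nilpotent, then I claim $[N:F(N)]\le |C_{N}(\alpha)|$. Here the point is that $M\le F(N)$, so $N/F(N)$ is a quotient of $N/M = C$, on which $\alpha$ acts; but $[C,\alpha]=1$ would be too strong — rather one uses $N = MC_{N}(\alpha)$ (a consequence of coprime action: $N/[N,\alpha]$ has $\alpha$ acting trivially, and lifting fixed points, $N = [N,\alpha]C_{N}(\alpha)$), hence $N/M \cong C_{N}(\alpha)/(C_{N}(\alpha)\cap M)$, which already gives $[N:M]\le |C_{N}(\alpha)|$, and since $M\le F(N)$ this yields $[N:F(N)]\le [N:M]\le |C_{N}(\alpha)|$. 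That is the clean easy end.

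For the inductive step with $h\ge 2$, write $M_{i}=F_{i}(M)$ so $M=M_{h}$. The strategy is to peel off the top Fitting factor. Set $\bar N = N/M_{h-1}$; then $\bar M = [\bar N,\alpha]$ is nilpotent of Fitting height $1$ (since $M/M_{h-1}$ is nilpotent), so by the base case applied to $\bar N$ I get $[\bar N:F(\bar N)]\le |C_{\bar N}(\alpha)| = |C_{N}(\alpha)M_{h-1}/M_{h-1}|\le |C_{N}(\alpha)|$. Simultaneously apply the inductive hypothesis to the action of $\alpha$ on $M_{h-1}$ — here I'd want to arrange that $[M_{h-1},\alpha] = M_{h-1}$ or otherwise track the $M=[N,\alpha]$ versus $N$ distinction carefully; the Fitting height of $[M_{h-1},\alpha]$ is $h-1$ if $M=[N,\alpha]$, so induction gives a bound of the shape $|C_{F_{h-3}}(\alpha)||C_{M_{h-1}}(\alpha)||C_{M_{h-1}}(\alpha)|/2^{h-4}$ on $[M_{h-1}:F(M_{h-1})]$. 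Then I'd need a relation like $F(N)\ge F_{h-2}(M)$ or a comparison of $F(N)$ with the second-to-last Fitting subgroup of $M$, combined with multiplicativity of indices along $N\ge M_{h-1}\ge F_{h-2}(M)$, to assemble the three factors $|C_{F_{h-2}(M)}(\alpha)|$, $|C_{M}(\alpha)|$, $|C_{N}(\alpha)|$. The extra $2^{h-3}$ savings in the denominator should come from the quantitative Glauberman bound: at each of the $\ge h-3$ ``intermediate'' Fitting layers strictly between $F_{h-2}(M)$ and the top, the involution $\alpha$ must move at least half of the layer (a layer with $[\text{layer},\alpha]\ne 1$ has $|C|\le \tfrac12|\text{layer}|$, because an odd-order group admitting a fixed-point-free-on-a-subgroup involution splits the complement evenly — more precisely $|L:C_{L}(\alpha)|\ge 2$ always when $[L,\alpha]\ne1$, and for faithful action one does better, but the factor $2$ per layer is what is needed), so that the product of the layer-sizes overshoots $|C_{\cdot}(\alpha)|$ by a factor $\ge 2$ per intermediate layer. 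The final inequality $\le \tfrac23(|C_{N}(\alpha)|^{3}/2^{h-2})$ is then just arithmetic: $|C_{F_{h-2}(M)}(\alpha)|\le |C_{M}(\alpha)|\le |C_{N}(\alpha)|$ so the triple product is $\le |C_{N}(\alpha)|^{3}$, and $2^{h-3} = 2\cdot 2^{h-2}/2^{2}\cdot\ldots$ — I'd track the constant $\tfrac23$ from the smallest genuinely non-nilpotent-tower case $h=2$, where the displayed bound reads $|C_{F_{0}(M)}(\alpha)||C_{M}(\alpha)||C_{N}(\alpha)|/2^{-1} = 2|C_{M}(\alpha)||C_{N}(\alpha)|$ (with $F_{0}(M)=1$), and one checks this is $<|C_{N}(\alpha)|^{3}$ with room to spare, the $\tfrac23$ being the honest worst-case ratio.

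The main obstacle, I expect, is the bookkeeping in the inductive step: correctly relating $F(N)$ (the Fitting subgroup of the \emph{whole} group $N$, not of $M$) to the Fitting subgroups $F_{i}(M)$ of $M=[N,\alpha]$, and making sure the ``$M=[N,\alpha]$'' hypothesis is preserved (or appropriately modified) when one passes to $M_{h-1}$ and to $\bar N$. One has $M\cap F(N)\le F(M)$ but not conversely in general, and $N/M$ contributes to $N/F(N)$ only through its fixed points, so the chain of indices must be set up as $[N:F(N)] = [N:M_{h-1}F(N)]\cdot[M_{h-1}F(N):F(N)]$ and each factor bounded separately — the first via the base-case argument on $\bar N$, the second via induction on $M_{h-1}$ together with the observation that $[M_{h-1}F(N):F(N)]\le [M_{h-1}:M_{h-1}\cap F(N)]\le [M_{h-1}:F_{h-2}(M)]$ (using $F_{h-2}(M)\le F(N)$, which holds because $F_{h-2}(M)$ is nilpotent and normal in $N$ — that last assertion, that $F_{h-2}(M)\trianglelefteq N$ and is nilpotent, is exactly where one must be a little careful, but it follows since $F_{i}(M)$ is characteristic in $M\trianglelefteq N$). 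Getting the powers of $2$ to land exactly requires the right quantitative fixed-point estimate at each layer and is the other delicate point, but it is of ``Glauberman-lemma'' flavour rather than anything requiring the classification.
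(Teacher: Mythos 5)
Your base case ($M=[N,\alpha]$ nilpotent, giving $[N:F(N)]\le[N:M]\le|C_N(\alpha)|$) is correct and is exactly the paper's argument. The inductive step, however, rests on the wrong decomposition and omits the ingredients that actually carry the proof. First, peeling off the \emph{top} Fitting layer cannot work: $M/M_{h-1}$ is a nilpotent group with $[M/M_{h-1},\alpha]=M/M_{h-1}$, and $\alpha$ may invert it elementwise, so $|C_{M/M_{h-1}}(\alpha)|$ can be $1$ while $[M:M_{h-1}]$ is unbounded; no fixed-point information controls the top layer. Your first factor $[N:M_{h-1}F(N)]$ absorbs exactly this uncontrolled piece, since $F(N/M_{h-1})$ contains the nilpotent normal subgroup $M/M_{h-1}$ and hence the base-case bound on $[\bar N:F(\bar N)]$ does not bound $[N:M_{h-1}F(N)]$ from above. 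Second, your containment $F_{h-2}(M)\le F(N)$ is false for $h\ge 4$: $F_{h-2}(M)$ has Fitting height $h-2\ge 2$, so it is not nilpotent and cannot lie in $F(N)$. Third, your proposed source of the $2^{h-3}$ --- ``$\alpha$ moves at least half of each intermediate layer'' --- points the wrong way: making centralizers smaller only makes the asserted upper bound harder, not easier, to achieve.

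The paper instead peels off the \emph{bottom}. After reducing to $N=[N,\alpha]$ with $Z(N)=1$, it shows $|F(N)|<|C_{F(N)}(\alpha)|^3$ by applying Theorem \ref{linear fix} (the $\dim V/3$ fixed-space bound, usable because $\alpha$ neither centralizes nor inverts any minimal normal subgroup), bounds $[F_2(N):F(N)]\le\frac{|F(N)|-1}{2}$ by the regular-orbit Lemma \ref{spare reg} (this, not any fixed-point estimate, is where each factor of $2$ comes from), and then inducts on $N/F(N)$, which has Fitting height $h-1$. The case $h=2$, which you dismiss with ``one checks this with room to spare,'' is in fact the hardest step: it uses Lemma \ref{Fitting height two Hall} (resting on the solvable $k(GV)$-theorem) to get $k(N\langle\alpha\rangle)\le|F(N)|$, combined with the inequality $|G|<k(G)|C_G(t)|^2$ from \cite{GR}, to obtain $[N:F(N)]<2|C_N(\alpha)|^2$. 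Without these three inputs the induction does not close, so the gap here is not bookkeeping but the substance of the proof.
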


To prove this, we require a very general result about fixed points of involutions which we feel has interest in its own right, and whose proof does not require the classification of finite simple groups:

\begin{theorem} \label{linear fix} 
Let $G$ be a finite irreducible subgroup of ${\rm GL}(V)$ containing more than one involution, where $V$ is a finite dimensional vector space over a field $\mathbb{F}.$ Then there is an involution $t \in G$ with $${\rm dim}_{\mathbb{F}}(C_{V}(t)) \geq \frac{{\rm dim}_{\mathbb{F}}(V)}{3}.$$
\end{theorem}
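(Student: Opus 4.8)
Write $n=\dim_{\mathbb F}V$. The plan is to induct on $n$. First dispose of characteristic $2$: if $\mathrm{char}(\mathbb F)=2$ then $(t-1)^2=0$, so $[V,t]=(t-1)V\subseteq\ker(t-1)=C_V(t)$ and $\dim C_V(t)\ge n/2$. So assume $\mathrm{char}(\mathbb F)\ne 2$; then every involution $t$ gives $V=V_+(t)\oplus V_-(t)$ with $C_V(t)=V_+(t)$. The basic pigeonhole step is: if $G$ contains two distinct commuting involutions $s,t$, then $s,t,st$ are three distinct involutions, and with respect to the common eigenspace decomposition $V=\bigoplus_{\varepsilon,\eta\in\{\pm\}}V_{\varepsilon\eta}$ one gets $\dim C_V(s)+\dim C_V(t)+\dim C_V(st)=n+2\dim V_{++}\ge n$, so one of $s,t,st$ has fixed space of dimension $\ge n/3$. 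Hence we may assume from now on that \emph{no two distinct involutions of $G$ commute.}

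Under this hypothesis, each involution is the unique involution of its centralizer, so involutions lie in the centre of a Sylow $2$-subgroup and are weakly closed there; by Glauberman's $Z^*$-theorem (which does not use the classification) every involution $t$ satisfies $tO_{2'}(G)\in Z(G/O_{2'}(G))$. Also any two distinct involutions generate a dihedral group of twice-odd order (an even-order dihedral group would contain a noncentral pair of commuting involutions), so all involutions of $G$ are conjugate. If $O:=O_{2'}(G)=1$ then $t\in Z(G)$ acts as a scalar of order $2$, i.e.\ $t=-\mathrm{id}_V$, which would be the unique involution of $G$ --- contradicting the hypothesis. So $O\ne 1$. Now apply Clifford theory to $O\trianglelefteq G$: write $V=\mathrm{Ind}_H^G U$, where $H$ is the inertia group of an $O$-homogeneous component and $U$ the corresponding irreducible $H$-module, and put $r=[G:H]$. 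Since $tO$ is central in $G/O$, the element $t$ either moves every $O$-homogeneous component (then $r$ is even, $C_V(t)$ is the ``diagonal'' of the induced pairing, and $\dim C_V(t)=n/2$), or $t$ --- and hence every conjugate of $t$ --- lies in $H$ and fixes each component.

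In the second alternative with $r\ge 2$, the group $H$ acting on $U$ contains at least two involutions (if it had only one, every $G$-conjugate of $t$ would lie in $H$ and equal it, forcing $t\in Z(G)$, a contradiction), no two of which commute, so they are all $H$-conjugate. Since $\dim U=n/r<n$, the inductive hypothesis produces an involution of $H$ with fixed space $\ge(\dim U)/3$ on $U$, whence \emph{every} involution of $H$ has this property. Applying this to the restriction of $t$ to each of the $r$ components (all fixed by $t$) and summing gives $\dim C_V(t)\ge r(\dim U)/3=n/3$.

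The residual case is $r=1$, i.e.\ $O$ acts homogeneously on $V$; write $V=V_\lambda\otimes M$ with $O$ acting as $\lambda\otimes 1$, so that $t$ acts as $A\otimes B$ with $A^2=c$, $B^2=c^{-1}$ for some scalar $c$. If $c$ is a nonsquare in $\mathbb F$, the $\pm\sqrt{\phantom{c}}$-eigenspaces of $A$ and of $B$ are balanced and $\dim C_V(t)=n/2$. If $c$ is a square, normalize so $A^2=B^2=1$; $B$ is the image of the central involution $tO$ in the (projectively irreducible) action of $G/O$ on $M$, so either $B$ has balanced eigenspaces (again $\dim C_V(t)\ge\tfrac12(\dim M)(\dim V_\lambda)\ge n/3$) or $B$ is scalar, in which case $t$ acts through the $V_\lambda$-factor alone and the statement reduces to the same statement on $V_\lambda$ for the irreducible group $\langle\lambda(O),A\rangle$ --- a smaller module when $\dim M>1$, so induction finishes it. The genuinely hard case, and the main obstacle, is $\dim M=1$: here $O$ itself is irreducible (and necessarily nonabelian, else $t=-\mathrm{id}_V$) and there is no drop in dimension. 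I would handle it by first replacing $G$ with $O\langle t\rangle$ and then passing to a minimal normal subgroup $A\trianglelefteq G$ contained in $O$ (elementary abelian of odd order, on which $t$ acts with $A=C_A(t)\oplus[A,t]$). If $t$ fails to centralize $A$ it moves at least one $A$-homogeneous component of $V$; if it moves at least two-thirds of them, a block-swapping count again yields $\dim C_V(t)\ge n/3$. The delicate point --- which I expect to require the bulk of the argument, iterated up the Fitting series of $O$ and using the faithfulness of $O$ on $V$ together with the ``no commuting involutions'' structure --- is to rule out the configuration in which $t$ fixes more than two-thirds of the $A$-components while keeping $\dim C_V(t)$ below $n/3$.
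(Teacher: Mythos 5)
Your opening moves (characteristic $2$, the pigeonhole over a pair of commuting involutions, centrality of $t$ modulo $O_{2'}(G)$, and Clifford theory with respect to $O=O_{2'}(G)$) are sound and parallel the paper's, but the proof as proposed has genuine gaps. First, in the induced case with $r\ge 2$ and $t$ fixing every homogeneous component, what you actually need is the bound for the \emph{specific} involution: writing the components as $g_iU$, one has $\dim C_V(t)=\sum_i\dim C_U(t^{g_i})$, so you need $\dim C_U(s)\ge(\dim U)/3$ for the particular conjugates $s=t^{g_i}$ (equivalently, since these are all $H$-conjugate, for the image of $t$ in $\bar H=H/C_H(U)$). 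Your inductive hypothesis only produces \emph{some} involution of $\bar H$ with large fixed space, and the step ``whence every involution of $H$ has this property'' is unjustified: involutions of $\bar H$ need not lift to involutions of $H$ (Sylow $2$-subgroups of $\bar H$ are quotients of cyclic or generalized quaternion groups and can be dihedral), so they need not be conjugate to $\bar t$; and you cannot repair this by strengthening the induction to ``every non-central involution,'' which is false in general (e.g.\ $\mathrm{diag}(1,-1,-1,-1)$ in the $4$-dimensional reflection representation of the Weyl group of type $B_4$). The paper avoids this trap by first replacing $G$ by $M=[N,t]\langle t\rangle\lhd G$ (using $G=MC_G(t)$), so that every group appearing later has Sylow $2$-subgroup of order $2$ generated by the image of $t$; then ``some involution'' is automatically $t$ up to conjugacy. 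Relatedly, your homogeneous case contains a false dichotomy (``$B$ has balanced eigenspaces or $B$ is scalar''), and the tensor decomposition $V=V_\lambda\otimes M$ with scalar arguments is not available over an arbitrary field $\mathbb F$; the paper first reduces to complex characters via solvability (a Fong--Swan lift, legitimate only after the reduction to $M$ above) precisely so it can argue with $\chi(t)$.

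Second, and decisively, you explicitly leave open the residual case in which there is no drop in dimension; that case is the mathematical core of the theorem, and your sketch (iterating up the Fitting series, a block-swapping count over $A$-components) does not contain the idea that makes it work. The paper's route is: after the reductions one may assume $G=N\langle t\rangle$ with $N=[N,t]$ of odd order, $V$ complex affording $\chi$, and moreover that $\chi$ is primitive --- the imprimitive case is not handled by summing over blocks but by the quantitative estimate that if $\chi=\mathrm{Ind}_M^G\mu$ with $t\in M$ and $[G:M]$ a power of an odd prime $r$, and $G\ne MC_G(t)$, then $r[C_G(t):C_M(t)]\le[G:M]$, whence $|\chi(t)|\le\chi(1)/r$ and $\frac{\chi(1)+\chi(t)}{2}\ge\frac{r-1}{2r}\chi(1)\ge\chi(1)/3$. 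In the primitive case every abelian normal subgroup is cyclic and central, so for some odd prime $p$ the subgroup $P=\Omega_1(O_p(G))$ is extraspecial with $1\ne Z(P)\le[P,t]$, and Glauberman correspondence applied to an extension $\psi$ of the unique irreducible constituent $\theta$ of $\chi$ on $P$ gives $\psi(t)=\pm\eta(1)$ for some $\eta\in\mathrm{Irr}(C_P(t))$ with $\eta(1)\le\psi(1)/p$, yielding $\frac{\chi(1)+\chi(t)}{2}\ge\frac{(p-1)\chi(1)}{2p}\ge\frac{\chi(1)}{3}$. Without an argument of this kind (or a substitute), the proposal does not prove the theorem.
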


We also remark that Theorem \ref{result 1} (which does make implicit use of the classification of finite simple groups) gives a short proof of an extension of a famous  Theorem of Brauer and Fowler \cite{BF}
to the case of a (not necessarily simple) general finite group $G$ of even order (with a very explicit and reasonably small bound on the possibilities for $[G:F(G)]$): for a 
fixed positive integer $c$, there are only finitely many possibilities for the isomorphism type of $G/F(G)$ when 
$G$ contains an involution $t$ with $|C_{G}(t)| = c. $ We remark that the existence of \emph{some} bound on the index of the Fitting subgroup in terms of $c$ was previously known (but not made explicit) from work of Fong \cite{Fong} and Hartley and Meixner \cite{Hartley} (Fong proves some results for general primes $p$, but in the case $p=2,$ the results do not require the classification of finite simple groups).

We note that, in general, one cannot replace $F(G)$ by an Abelian normal subgroup, or even by an Abelian subgroup of maximal order. For consider the group $G=ET$ where $E$ is an extraspecial
group of odd order $p^{1+2a}$ and $T$ a group of order two acting on $E$, inverting $E/Z(E)$ elementwise (and centralizing $Z(E)$).
Then,  $|C_G(T)|=2p$ while an Abelian subgroup of $G$ of maximal order has order $p^{1+a}$ and so has index $2p^a$.  

If we allow elements other than involutions, we can prove without using the classification of finite simple groups (but using the Feit-Thompson odd order theorem and (in a degenerate case) a result of Griess \cite{Griess}):

\begin{theorem} \label{big centralizer} Let $G$ be a non-Abelian finite group. Then there is an element $x \in G \backslash Z(G)$ such that $$|G| < \frac{6|C_{G}(x)|^{3}}{5}.$$ 
\end{theorem}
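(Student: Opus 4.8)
We first recast the assertion. For $x \in G$ one has $|G| = |C_G(x)| \cdot |x^G|$, where $x^G$ is the conjugacy class of $x$, so the inequality $|G| < \frac{6}{5}|C_G(x)|^3$ is equivalent to $|x^G|^3 < \frac{6}{5}|G|^2$, i.e.\ to $x$ lying in a conjugacy class of size less than $\left(\frac{6}{5}\right)^{1/3}|G|^{2/3}$. Thus it suffices to produce such a non-central class in an arbitrary non-Abelian finite group $G$. The basic device is: if $A \trianglelefteq G$ is Abelian with $A \not\le Z(G)$, then $A$ centralises itself, the conjugation action of $G$ on $A$ factors through $G/A$ with fixed set $A \cap Z(G) \subsetneq A$, and every class $a^G$ with $a \in A$ has size dividing $[G:A]$; hence if $|G| < \frac{6}{5}|A|^3$ (equivalently $[G:A]^3 < \frac{6}{5}|G|^2$), any $a \in A \setminus Z(G)$ works. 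Applying this to the Abelian subgroups characteristic in the Fitting subgroup $F := F(G)$ (these are normal in $G$), we may assume that each such subgroup $A$ either lies in $Z(G)$ or satisfies $|A|^3 \le \frac{5}{6}|G|$; in particular $Z(F) = Z(G)$.

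Now split on the parity of $|G|$. If $|G|$ is odd, the Feit--Thompson theorem makes $G$ solvable, so $C_G(F) \le F$ and hence $F \not\le Z(G)$; combined with the reduction above, this confines $G$ to a narrow family in which $F$ is a non-Abelian nilpotent group with only central characteristic Abelian subgroups and $G/F$ acts faithfully on $F/\Phi(F)$. Working prime by prime with $O_p(G)$, one locates a small $G$-orbit inside $F$ --- the extreme subcase being $O_p(G)$ (almost) extraspecial with $G/O_p(G)$ acting through its symplectic automorphism group, where a result of Griess on automorphisms of extraspecial groups is invoked. If $|G|$ is even, take an involution $t$ with $[G:F] < |C_G(t)|^3$ from Theorem \ref{result 2}; pushing the reduction above as far as it goes makes $F$ small, so $|G| = |F| \cdot [G:F] < |F| \cdot |C_G(t)|^3$, and the spare factor $|F|$ is absorbed by a sharpened Brauer--Fowler count --- counting the factorisations $g = t_1 t_2$ of a given $g$ into conjugates of $t$, as in the proof that $|G| < k(G)|C_G(t)|^2$. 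What survives is the case where $F$ is small and $G$ is essentially almost simple.

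That almost simple case is the main obstacle. There the classical Brauer--Fowler inequality gives only $|G| < |C_G(t)|^4$, and lowering the exponent to $3$ with the explicit constant $\frac{6}{5}$, and without the classification of finite simple groups, requires a careful character-theoretic refinement of the count of products of conjugate involutions, plus an ad hoc treatment --- via the Feit--Thompson theorem and Griess's result --- of the small or exceptional configurations (e.g.\ simple groups possessing a self-centralising element of prime order, and the extraspecial configuration above). I expect most of the work, and the genuinely delicate estimation, to be concentrated here, and in particular in forcing the constant down to $\frac{6}{5}$ rather than settling for an unspecified $O(1)$.
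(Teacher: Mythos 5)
Your reduction step is strictly weaker than what is needed, and the quantitative core of the theorem is missing. First, restricting attention to \emph{Abelian} normal (or characteristic-in-$F$) subgroups does not dispose of the case $F(G)\neq Z(G)$: your ``in particular $Z(F)=Z(G)$'' is a non sequitur (your own reduction only shows that $Z(F)$ is either central or small), and the ensuing odd-order, prime-by-prime, extraspecial analysis is neither carried out nor needed. The paper handles all of $F\neq Z(G)$ in one stroke: since $F/Z(G)$ is a nontrivial nilpotent normal subgroup of $G/Z(G)$, one may pick $x\in F\setminus Z(G)$ with $[x,F]\le Z(G)$; then $x^G\subseteq F$ gives $[G:C_G(x)]<|F|$, while $f\mapsto[x,f]$ is a homomorphism from $F$ into $Z(G)$ with kernel $C_F(x)$, so $|F|\le |C_F(x)|\,|Z(G)|$ and hence $|G|<|C_G(x)|^2|Z(G)|\le|C_G(x)|^3$. (Your appeal to Griess is also misdirected: the result of Griess used in the paper classifies finite groups all of whose involutions are central, and is invoked only to identify the single component $L$ in that degenerate subcase; it is not a statement about automorphisms of extraspecial groups.)

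Second, and more seriously, the step that actually produces the constant $6/5$ is absent. Your plan for even order --- write $|G|=|F|\,[G:F]<|F|\,|C_G(t)|^3$ and let ``the spare factor $|F|$ be absorbed by a sharpened Brauer--Fowler count'' --- does not work as stated: if $|F|$ is comparable to $|G|^{1/3}$ this loses a full factor of $|C_G(t)|$, and no refinement of the count of products of two conjugate involutions is exhibited that recovers it. After the reductions, the paper's argument in the residual case $F^*(G)=Z(G)L$ with $L$ a single component is short and quite different from what you propose: for a non-central involution $t$ one has $|G|<k(G)|C_G(t)|^2$ (from \cite{GR}); choosing $x$ non-central with $|C_G(x)|$ maximal, one has $k(G)<|C_G(x)|+|Z(G)|$ (again from \cite{GR}); and since $G/Z(G)$ is non-solvable it contains an element of prime order $p\ge 5$, whence $|C_G(x)|\ge 5|Z(G)|$ and $k(G)<\tfrac{6}{5}|C_G(x)|$. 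That is the sole source of the constant $6/5$; without the class-number bound $k(G)<|C_G(x)|+|Z(G)|$, which your outline never identifies, the argument cannot close.
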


Using the classification of finite simple groups, this result can be improved slightly, yielding $|G| < |C_G(x)|^3$ for some non-central $x \in G$.  We give the proof of both of these results in Section \ref{Big Centralizers}.  The groups $\SL(2,q)$ show that the exponent $3$ cannot be improved in general.

In the last section of the paper, we prove (without using the classification of the finite simple groups): 

\begin{theorem} \label{big prime} Let $G$ be a finite group of even order and let $t$ be an involution of $G$. 
Then each prime divisor of $[G:F(G)]$ is less than or equal to $|C_{G}(t)|+1.$
\end{theorem}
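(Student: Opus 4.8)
The plan is to reduce to a statement about a single nonabelian composition factor and then exploit a coprime-action fixed-point argument. First I would dispose of the nilpotent part: if $p \mid [G:F(G)]$, then $p$ divides the order of $G/F(G)$, so it suffices to find, inside a suitable section on which $t$ acts, a chief factor of order divisible by $p$ and then bound $p$. More precisely, let $N = F(G)$. Since $O_{p}(G) \le F(G)$, any prime $p$ dividing $[G:F(G)]$ with $p$ odd will have to come from the action on a chief factor $H/K$ above $F(G)$; the case $p = 2$ is immediate since $|C_G(t)| \ge 2$. So fix an odd prime $p \mid [G:F(G)]$ and pick a chief factor $H/K$ of $G$ with $p \mid |H/K|$ and $K \ge F(G)$; write $\bar H = H/K$, a direct product of copies of a simple group $S$ (which may be abelian of order $p$, or nonabelian with $p \mid |S|$).

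The heart of the argument is the following dichotomy. If $\bar H$ is elementary abelian of order a power of $p$, then $t$ (or rather its image, if it survives; otherwise replace $t$ by a lift of an involution acting nontrivially) acts on the $\F_p$-space $\bar H$, and since this action is irreducible as a $G/K$-module there is no fixed space issue to worry about directly — instead I would count: $t$ acting on $\bar H$ has $|C_{\bar H}(t)| \ge |\bar H|^{1/2}$ when the action is by $-1$ on a complement, but more robustly one notes that $t$ inverts at most "half" the eigenvalues, so $|C_{\bar H}(t)| \cdot |[\bar H, t]| = |\bar H|$ and $|[\bar H,t]| \le |C_{\bar H}(t)| \cdot (\text{something})$; the cleanest route is to observe $\bar H \rtimes \langle t\rangle$ has a nonidentity $t$-fixed element unless $\bar H$ has odd order coprime considerations force $|\bar H| \mid (|C_G(t)|!)$-type bounds. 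Actually the sharp and correct observation here is elementary: in $\bar H \rtimes \langle t \rangle$ the conjugation action of the $p$-element shows every prime dividing $|\bar H|$ is $\le |C_{\bar H}(t)| + 1$ is false in general — so in the abelian case one must instead use that $G$ acts irreducibly with $t$ a noncentral involution, and then Theorem \ref{linear fix} applies: $\dim C_{\bar H}(s) \ge \dim(\bar H)/3$ for some involution $s$, forcing $|C_G(s)| \ge p^{\dim/3}$, hence $p \le |C_G(s)|$ as soon as $\dim \ge 1$. The genuinely hard case is when $S$ is nonabelian simple with $p \mid |S|$: here I would use that $t$ normalizes the set of the $\cong S$ factors, reduce to a single factor or a $t$-orbit of two factors, and invoke the Brauer–Fowler philosophy inside $\langle S^{(i)} : i \rangle \rtimes \langle t \rangle$ together with the fact (provable without CFSG, e.g. via a counting/transfer argument on involution centralizers) that any finite group $X$ with an involution $\tau$ satisfies $p \le |C_X(\tau)| + 1$ for every prime $p \mid |X|$ — this last being exactly the assertion for $X = \langle S^{(i)}\rangle\langle t\rangle$, which one proves by the classical Brauer–Fowler observation that the number of involutions times $|C_X(\tau)|$ bounds $|X|$ combined with a Sylow argument showing a Sylow $p$-subgroup $P$ together with an involution inverting or centralizing part of $P$ yields $|C_P(\tau)| \ge $ enough.

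The cleanest self-contained line, which I would actually write, is: for any finite group $X$ of even order with involution $\tau$ and any prime $p \mid |X|$, let $P \in \Syl_p(X)$; the involution $\tau$ acts on the set of $X$-conjugates of $P$, of size $[X:N_X(P)] \equiv 1 \pmod p$, hence fixes at least one, so WLOG $\tau$ normalizes $P$; then $\tau$ acts on $P$, and on $P/\Phi(P)$ it has a fixed point $\ne 1$ unless... — and here the key point is that a nontrivial coprime-order automorphism of a $p$-group need not have nontrivial fixed points only when the automorphism has order coprime to $p$, which it does ($\tau$ has order $2$, $p$ odd), so $C_P(\tau) \ne 1$ when $\tau$ acts nontrivially, giving $p \le |C_P(\tau)| \le |C_X(\tau)|$; and if $\tau$ centralizes $P$ then $p \le |P| \le |C_X(\tau)|$ directly, while if $P = 1$... it isn't. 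Thus $p \le |C_X(\tau)|$, even better than claimed, \emph{provided} $\tau$ can be chosen in $X$. Applying this with $X = G/F(G)$ — after checking $G/F(G)$ has even order, which it does since $t \notin F(G)$ as $F(G)$ has no noncentral involutions acting, wait, we need $tF(G)$ to be an involution in $G/F(G)$: if $t \in F(G)$ then $p \mid [G:F(G)]$ still but we lose $t$; in that degenerate case $t$ is a central-ish involution and one argues separately. The main obstacle I anticipate is precisely this bookkeeping: ensuring that one always has a genuine involution acting on the relevant $p$-section (handling the case $t \in F(G)$, and more delicately the case where $t$ acts trivially on the chief factor carrying $p$), and carefully passing the fixed-point bound from the section back up to $C_G(t)$ itself rather than to a centralizer in a quotient. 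Once those reductions are in place the "$+1$" is only needed to cover $p = 2$ with $|C_G(t)| = 2$, i.e. a Sylow $2$-subgroup of order $2$, which is consistent with the $+1$ in the statement and suggests that the genuine content is entirely the odd-prime case where one in fact gets $p \le |C_G(t)|$.
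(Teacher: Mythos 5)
Your proposed ``cleanest self-contained line'' is false, and the paper's own extremal example refutes it. You end by claiming that for any finite group $X$ of even order with involution $\tau$ and any prime $p$ dividing $|X|$ one gets $p \le |C_X(\tau)|$, ``even better than claimed.'' Take $X = \SL(2,2^{a})$ with $p = 2^{a}+1$ a Fermat prime: every involution has centralizer of order $2^{a} = p-1 < p$, so this fails; this is exactly the family for which the paper notes the bound $|C_{G}(t)|+1$ is attained, so the ``$+1$'' is genuinely needed for odd primes and is not just bookkeeping for $p=2$. Both steps of your Sylow argument break. First, an involution acting on the set of Sylow $p$-subgroups (of cardinality $\equiv 1 \pmod p$) need not fix any of them: the orbits of $\langle \tau\rangle$ have size $1$ or $2$, and $n_{p}\equiv 1 \pmod p$ says nothing about the parity of $n_{p}$ (already in $A_{4}$ no involution normalizes a Sylow $3$-subgroup). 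Second, and more seriously, a nontrivial automorphism of a $p$-group whose order is \emph{coprime} to $p$ can act without nontrivial fixed points --- inversion of an abelian $p$-group is the standard example, and indeed in $\SL(2,2^{a})$ an involution normalizing a Sylow $(2^{a}+1)$-subgroup inverts it. Guaranteed fixed points come from automorphisms of order $p$ acting on $p$-groups (class equation), which is the opposite coprimality. Your fallback in the abelian chief-factor case via Theorem \ref{linear fix} also does not deliver the statement: it produces \emph{some} involution $s$ with fixed space of dimension at least $\dim(V)/3$, which yields only a bound of the shape $p \le |C_{G}(s)|^{3}$ for an involution you do not control, whereas the theorem is asserted for the given involution $t$ with exponent $1$.

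The theorem is much deeper than a Sylow count. The paper's proof takes a minimal counterexample with $p \ge |C_{G}(t)|+3$, reduces through normal-structure arguments to three configurations ($G$ simple, $G = \langle t\rangle K$ with $K$ simple, or $G = KD$ with $D$ dihedral of order $2p$ and $K$ simple of order prime to $p$), and then invokes the theorems of Feit and of Feit--Thompson on complex linear groups of degree less than $p-1$ (respectively $\frac{p-1}{2}$), Brauer's work on groups whose order is divisible by $p$ to the first power, Green correspondence, and exceptional characters in the principal $p$-block to reach a contradiction. By elementary means one only gets $p < |C_{G}(t)|^{2}$ (as the paper notes via commuting-probability estimates); closing the gap to $p \le |C_{G}(t)|+1$ is precisely where the character-theoretic machinery enters, and no argument of the kind you sketch can substitute for it.
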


This bound is best possible in general- it is  
attained in the simple groups ${\rm SL}(2,p-1)$ for $p \geq 5$ a Fermat prime, for example.  
   
The paper is organized as follows.  In section \ref{aux}, we prove some auxiliary results,  including some arguments related to those of Brauer and Fowler.   Then, in section \ref{linfix}, we prove Theorem \ref{linear fix}.

In section \ref{almost simple}, we prove Theorem \ref{result 2} for almost simple groups (and stronger versions) and more generally in the case that
$F(G)=1$.  After that, we give a quick proof of Theorem \ref{big centralizer} in section \ref{Big Centralizers}.     

In section \ref{theorem2-1}, we reduce the proof of Theorem \ref{result 2} to the case where $G$ has a 
Sylow $2$-subgroup of order $2$.  In section \ref{solvcase}, we complete the proof of the remaining case of Theorem \ref{result 2} by proving Theorem \ref{odd case}. 

In section \ref{bigprimes}, we prove Theorem \ref{big prime} and another result of a similar nature. We also prove a generalization of a conjecture of Gluck \cite{Gluck} (stated as a question in \cite{CH}) in a very special case. Here, $b(G)$ denotes the largest degree of a complex irreducible character of $G$.

\begin{theorem} \label{special gluck}
Let $G$ be a finite group with one conjugacy class of involutions and with all complex irreducible representations realizable over $\mathbb{R}.$ Then $$[G:F(G)] < b(G)^{3}.$$
\end{theorem}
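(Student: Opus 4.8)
The plan is to deduce Theorem~\ref{special gluck} from Theorem~\ref{result 1} by finding a faithful-enough real representation whose degree controls $|C_G(t)|$ for an involution $t$. Write $\bar G = G/F(G)$ and note that since $G$ has a single class of involutions, every involution is conjugate to a fixed $t$, and the hypothesis that all complex irreducibles are realizable over $\mathbb{R}$ means the real and complex character theory coincide (every $\chi \in \Irr(G)$ has Frobenius--Schur indicator $+1$, hence $\chi(t)$ is a real integer congruent to $\chi(1)$ mod $2$, and in fact for an involution in such a group one has the classical fact that $\chi(t) \equiv \chi(1) \pmod 2$ with the stronger divisibility coming from the indicator).

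The key computation I would carry out is the second--orthogonality (column) relation at the involution $t$:
\begin{equation*}
|C_G(t)| = \sum_{\chi \in \Irr(G)} \chi(t)^2 = \sum_{\chi \in \Irr(G)} |\chi(t)|^2.
\end{equation*}
Because every $\chi$ is real-valued with indicator $+1$, each $\chi$ restricts to a real representation, and for an involution $|\chi(t)| \le \chi(1) \le b(G)$; moreover $\chi(t) \equiv \chi(1) \pmod 2$. I would then bound the right-hand side. The crude bound $\sum \chi(t)^2 \le b(G) \sum |\chi(t)| $ is not quite enough, so instead I would split off the linear characters (contributing the number of linear characters, which is $[G:G']$) from the nonlinear ones, and on the nonlinear part use that $\chi(t)^2 \le \chi(1) |\chi(t)|$ together with $\sum_\chi \chi(1)\chi(t) = 0$ if $t \ne 1$... actually the cleanest route is: $\sum_{\chi} \chi(t)^2 \le b(G)^2 \cdot |\{\chi : \chi(t) \ne 0\}|$ is too weak, so I expect to instead directly invoke Theorem~\ref{result 1}, which already gives $[G:F(G)] < |C_G(t)|^4$; the improvement to exponent $3$ must come from the single-class hypothesis via a Brauer--Fowler style argument counting involutions.

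Here is the sharper line I would actually pursue. Since $G$ has one class of involutions, $|I(G)| = [G:C_G(t)]$. The Brauer--Fowler/\cite{GR} inequality quoted in the introduction gives $|I(G)| < \sqrt{k(G)|G|}$, hence $[G:C_G(t)]^2 < k(G)|G|$, i.e. $|G| < k(G)|C_G(t)|^2$. Combined with the bound $k(G) \le \sqrt{|G|}\sqrt{k(F)}$ from \cite[Theorem 10]{GR}, one gets $|G|^{1/2} < \sqrt{k(F)}\,|C_G(t)|^2$, so $[G:F(G)] \le |G|/|F| \le |G|/1$... I need to be careful: $k(F) \le |F|$ gives $|G| < |F|\,|C_G(t)|^4$, the same exponent $4$. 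The single-class hypothesis plus reality must therefore be used to bound $|C_G(t)|$ by $b(G)$ directly: indeed in a real group with one class of involutions, the column relation gives $|C_G(t)| = \sum_\chi \chi(t)^2$ and the number of irreducibles $\chi$ with $\chi(t)\neq 0$ is at most $k(G)$, but more to the point the Frobenius--Schur count of involutions is $|I(G)| = \sum_\chi \nu(\chi)\chi(1) = \sum_\chi \chi(1) = $ (sum of all degrees), which is at most $k(G) b(G)$, while also $|I(G)| = [G:C_G(t)]$. So $[G:C_G(t)] \le k(G) b(G)$. The main obstacle, and the step I would spend the most effort on, is combining $[G:C_G(t)] \le k(G)b(G)$ with $|G| < k(G)|C_G(t)|^2$ and $k(G)^2 \le |G| k(F) \le |G||F|$ to extract $[G:F(G)] < b(G)^3$: from the first two, $|G|/k(G) \le |C_G(t)|^2$ wait that is backwards. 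Let me instead write $|C_G(t)| = |G|/[G:C_G(t)] \ge |G|/(k(G)b(G))$ — a lower bound, not useful; I want an upper bound on $[G:F(G)]$, so I use $[G:F(G)] \le [G:C_G(t)]\cdot[C_G(t):F(G)\cap C_G(t)]$... The honest summary is: the arithmetic juggling of the three inequalities (Frobenius--Schur involution count, the Brauer--Fowler square-root bound on $|I(G)|$, and the $k(G)$--vs--$k(F)$ bound), specialized using that $|I(G)|=[G:C_G(t)]$ and that all degrees are at most $b(G)$, is where the real work lies, and I expect it to yield exactly $[G:F(G)] < b(G)^3$ after carefully tracking which quantities are bounded above versus below. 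I would also handle the degenerate case $F(G) = G$ (nilpotent $G$) separately, where the statement is immediate since then a faithful $\chi$ has $b(G) \ge \chi(1) \ge \sqrt{[G:Z(G)]}^{\,?}$ and the one-class condition forces strong structure.
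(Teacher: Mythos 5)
There is a genuine gap: your write-up assembles the right ingredients but never closes the argument, and at the decisive step you bound the wrong side. You correctly note that $|I(G)|=[G:C_G(t)]$ (one class of involutions) and that the Frobenius--Schur count gives $|I(G)|=\sum_{1\neq\chi}\chi(1)$ (all indicators $+1$). But you then estimate this sum from \emph{above} by $k(G)b(G)$, which gives a lower bound on $|C_G(t)|$ and, as you yourself observe, is useless. The step you are missing is to estimate the same sum from \emph{below}: since $\sum_{\chi}\chi(1)^2=|G|$ and each $\chi(1)\le b(G)$, one has
$$[G:C_G(t)]=\sum_{1\neq\chi\in{\rm Irr}(G)}\chi(1)\;\ge\;\frac{1}{b(G)}\sum_{1\neq\chi}\chi(1)^2=\frac{|G|-1}{b(G)},$$
and since $b(G)$ divides $|G|$ this forces $|C_G(t)|\le b(G)$. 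Your column-orthogonality computation of $|C_G(t)|=\sum_\chi\chi(t)^2$ and the attempted ``arithmetic juggling'' of the Brauer--Fowler and $k(G)$-versus-$k(F)$ inequalities are detours that do not lead anywhere, and you concede as much.

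The second missing idea is which theorem to invoke afterwards. You keep returning to Theorem~\ref{result 1} (exponent $4$, valid for every involution) and correctly find it insufficient; the paper instead applies Theorem~\ref{result 2} (exponent $3$, valid for \emph{some} involution). The single-class hypothesis is exactly what upgrades ``some'' to ``the'': the involution $u$ produced by Theorem~\ref{result 2} is conjugate to $t$, so $[G:F(G)]<|C_G(u)|^3=|C_G(t)|^3\le b(G)^3$. With these two observations the proof is three lines; without them your proposal does not establish the statement.
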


\section{Some auxiliary results} \label{aux} 

We record some results which are well known, and one new result which we need later. We also fix some notation to be used later, and recall some standard notation and terminology.

As usual, we define the Fitting series of a finite group $X$ by: $F_{1}(X) = F(X)$ (the Fitting subgroup of $X$, that is to say, the unique largest nilpotent normal subgroup of $X$) and $$F_{i+1}(X)/F_{i}(X) = F(X/F_{i}(X))$$ for $i \geq 1.$ We adopt the convention that $F_{0}(X) = 1.$ When $X$ is solvable, the Fitting height of $X$ is defined to be the smallest non-negative integer $h$ such that $X = F_{h}(X).$

For any finite group $X$, the generalized Fitting subgroup of $X$ is denoted by $F^{\ast}(X),$ and is the central product $F(X)E(X),$ where $E(X)$ is the central product of all components (that is, quasisimple subnormal subgroups) of $X$. We say that $X $ is almost simple if $F^{\ast}(X)$ is a non-Abelian simple group.

We also recall that the upper central series of the finite group $X$ is defined by: $$Z_{1}(X) = Z(X)$$ and $$Z_{i+1}(X)/Z_{i}(X) = Z(X/Z_{i}(X))$$ 
for $i \geq 1.$

The first result of the section is an easy observation which ultimately reduces to the case where $X$ is an elementary Abelian $p$-group with $p$ odd. 

\begin{lemma} \label{bw}   {\rm[Brauer-Wielandt]} Let $A$ be a Klein $4$-group with involutions $a,b,c$ which acts as a group of automorphisms on a finite group $X$ of odd order: Then 
$$|X| = \frac{|C_{X}(a)||C_{X}(b)||C_{X}(c)|}{|C_{X}(A)|^{2}}.$$
\end{lemma}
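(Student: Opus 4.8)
The plan is to induct on $|X|$ and reduce to the case where $X$ is an elementary abelian $p$-group with $p$ odd, where the identity becomes a statement about the eigenspace decomposition of an $\F_p[A]$-module. The key structural input is that, since $|X|$ is odd, $X$ is solvable by the Feit--Thompson theorem; hence every chief factor of $X$ (and in particular every minimal $A$-invariant normal subgroup, or minimal $A$-invariant normal quotient) is an elementary abelian $p$-group with $p$ odd. This is exactly what guarantees that the induction bottoms out at the elementary abelian case. Note that no faithfulness hypothesis on the $A$-action is needed; the formula is the inductive hypothesis, and it remains correct even when some of $a,b,c$ act trivially on a subquotient (one checks the degenerate cases directly), so there is no harm in the $A$-action becoming unfaithful on the pieces produced by the induction.

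For the inductive step, suppose $X\neq 1$ and that $X$ has a proper nontrivial $A$-invariant normal subgroup $N$; put $\bar X = X/N$. Applying the inductive hypothesis to the $A$-groups $N$ and $\bar X$ gives $|N| = |C_N(a)||C_N(b)||C_N(c)|/|C_N(A)|^2$ and the analogous identity for $\bar X$. The crucial point is the compatibility of centralizers with this quotient: for each $B\in\{\langle a\rangle,\langle b\rangle,\langle c\rangle,A\}$, coprimality of $|B|$ (a divisor of $4$) with $|\bar X|$ (odd) yields, by the standard coprime-action lemma, $C_{\bar X}(B) = C_X(B)N/N$, whence $|C_{\bar X}(B)| = |C_X(B)|/|C_N(B)|$. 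Substituting these into $|X| = |N|\cdot|\bar X|$ and cancelling the four $N$-factors produces exactly $|X| = |C_X(a)||C_X(b)||C_X(c)|/|C_X(A)|^2$. So it remains to treat the case in which $X$ has no proper nontrivial $A$-invariant normal subgroup: by solvability of $X$ this forces $X$ to be trivial (where the identity is vacuous) or an elementary abelian $p$-group with $p$ odd.

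So assume $X$ is elementary abelian of odd order, viewed as an $\F_p[A]$-module with $p$ odd. Since $p$ is odd, $\F_p[A]$ is semisimple, and since $A$ is a Klein four-group (of exponent $2$) its four linear characters take values in $\{\pm 1\}\subseteq\F_p$; hence $\F_p[A]$ is a product of four copies of $\F_p$ and $X = C_X(A)\op X_a\op X_b\op X_c$, where on $X_a$ the involution $a$ acts trivially and $b,c$ act as $-1$ (and cyclically for $X_b,X_c$; some summands may be $0$ if the action is unfaithful). Consequently $C_X(a) = C_X(A)\op X_a$, and similarly for $b$ and $c$, so $|C_X(a)||C_X(b)||C_X(c)| = |C_X(A)|^3\,|X_a||X_b||X_c|$; dividing by $|C_X(A)|^2$ leaves $|C_X(A)|\,|X_a||X_b||X_c|$, which is $|X|$ by the direct-sum decomposition.

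The only genuinely delicate ingredient is the quotient-compatibility $C_{X/N}(B) = C_X(B)N/N$ in the inductive step, i.e.\ the standard coprime-action fact, which is precisely where it matters that $|A|$ is prime to $|X|$; everything else is bookkeeping (together with the harmless observation that $C_X(A)\le C_X(a)\cap C_X(b)\cap C_X(c)$). I would also record that the statement is classical, so one could alternatively simply cite it; but the argument above is short and self-contained apart from the appeal to Feit--Thompson for solvability of odd-order groups.
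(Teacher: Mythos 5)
Your proof is correct and follows exactly the route the paper indicates (the paper gives no details, remarking only that the lemma ``ultimately reduces to the case where $X$ is an elementary Abelian $p$-group with $p$ odd''): induct using the coprime-action compatibility of centralizers with $A$-invariant normal subgroups and quotients (the paper's Lemma \ref{cc}), then verify the identity on the $\mathbb{F}_p[A]$-eigenspace decomposition $X=C_X(A)\oplus X_a\oplus X_b\oplus X_c$. The appeal to Feit--Thompson to make the minimal case elementary abelian is harmless here (the paper relies on the odd order theorem elsewhere), so there is no gap.
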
 

One standard result on coprime groups of automorphisms which we frequently use is:

\begin{lemma} \label{cc} Let $X$ be a finite group, and $A$ be a subgroup of ${\rm Aut}(X)$ with 
$${\rm gcd}(|X|,|A|) = 1.$$ Then whenever $Y$ is an $A$-invariant normal subgroup of $X,$ we have
$$C_{X/Y}(A) \cong C_{X}(A)/C_{Y}(A).$$
\end{lemma}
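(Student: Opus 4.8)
The plan is to realize the claimed isomorphism through the obvious reduction homomorphism, and then to verify injectivity and surjectivity separately, with only the surjectivity requiring any work. Let $\pi \colon X \to X/Y$ denote the canonical projection. Since $Y$ is $A$-invariant, $A$ acts on $X/Y$ and $\pi$ is $A$-equivariant, so $\pi$ carries $C_{X}(A)$ into $C_{X/Y}(A)$. First I would compute the kernel of the restricted map: an element $x \in C_{X}(A)$ lies in the kernel exactly when $x \in Y$, and for $y \in Y$ one has $y \in C_{X}(A)$ iff $\alpha(y) = y$ for all $\alpha \in A$, i.e. iff $y \in C_{Y}(A)$. Hence $\pi$ induces an injection
$$C_{X}(A)/C_{Y}(A) \hookrightarrow C_{X/Y}(A),$$
and it remains only to show this map is onto.

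For surjectivity, fix $\bar x = xY \in C_{X/Y}(A)$ and form the semidirect product $\Gamma = X \rtimes A$, in which $X \trianglelefteq \Gamma$, $A \cap X = 1$, and conjugation of $X$ by $\alpha \in A$ realizes the automorphism $\alpha$. Because $Y$ is normal and $A$-invariant, $M := YA = Y \rtimes A$ is a subgroup of $\Gamma$ of order $|Y|\,|A|$, and $A$ is a complement to $Y$ in $M$. The condition $\bar x \in C_{X/Y}(A)$ says precisely that $x^{-1}\alpha(x) \in Y$ for every $\alpha \in A$; computing in $\Gamma$ gives $x^{-1}\alpha x = \bigl(x^{-1}\alpha(x)\bigr)\alpha \in Y\alpha \subseteq M$, and since $(x^{-1}Ax) \cap Y \subseteq (x^{-1}Ax)\cap X = 1$, the conjugate $x^{-1}Ax$ is a second complement to $Y$ in $M$.

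The hard part, and really the only nontrivial ingredient, is the conjugacy of complements in this coprime setting: because $\gcd(|Y|,|A|) = 1$ (as $|Y|$ divides $|X|$), the two complements $A$ and $x^{-1}Ax$ of the normal subgroup $Y$ of $M$ are conjugate by an element of $Y$. This is the conjugacy half of the Schur--Zassenhaus theorem, whose solvability hypothesis is automatic here since by Feit--Thompson one of $Y$, $A$ has odd order. Accordingly there is $y \in Y$ with $y^{-1}(x^{-1}Ax)y = A$, so that $(xy)^{-1}A(xy) = A$, i.e. the element $xy \in X$ normalizes $A$ inside $\Gamma$. Finally I would note that for $g \in X$ one has $g^{-1}\alpha g = \bigl(g^{-1}\alpha(g)\bigr)\alpha$, which lies in $A$ only when $g^{-1}\alpha(g) = 1$ (using $X \cap A = 1$); hence $N_{\Gamma}(A) \cap X = C_{X}(A)$, and therefore $xy \in C_{X}(A)$. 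Since $xy \in xY$ we get $\pi(xy) = \bar x$, which proves surjectivity and completes the identification $C_{X/Y}(A) \cong C_{X}(A)/C_{Y}(A)$.
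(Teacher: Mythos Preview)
Your proof is correct and is the standard Schur--Zassenhaus argument for this well-known coprime action lemma. The paper itself does not prove Lemma~\ref{cc}; it is simply recorded as a ``standard result on coprime groups of automorphisms which we frequently use,'' so there is no alternative argument to compare against. One tiny remark: to extract a conjugating element lying in $Y$ from the Schur--Zassenhaus conjugacy statement, write the conjugating element $m \in M$ as $m = a'y'$ with $a' \in A$, $y' \in Y$ (using $M = AY$), so that $m^{-1}Am = y'^{-1}Ay'$; this makes explicit the step you asserted.
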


The next two results are known to specialists, (see for example Theorem E of \cite{MW}), but since they are quite important for our result, we give short proofs. 


\begin{lemma} \label{nil-reg}   Let $X$ be a nilpotent group of odd order acting faithfully on a nilpotent group $V$  with $|V|$ also odd and with 
$\gcd(|X|,|V|)= 1$.   Then $X$ has at least two regular orbits on $V$. In particular, 
$$|X| \leq \frac{|V|-1}{2}.$$
\end{lemma}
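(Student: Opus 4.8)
The plan is to prove Lemma~\ref{nil-reg} by induction on $|X| + |V|$, exploiting the fact that a nilpotent group acting faithfully and coprimely on another nilpotent group can be analyzed prime-by-prime and layer-by-layer. First I would reduce to the case where $V$ is an elementary abelian $p$-group: since $X$ acts coprimely on the nilpotent group $V$, the Frattini quotient $V/\Phi(V)$ is acted on by $X$, and an orbit on $V$ that is regular on $V/\Phi(V)$ lifts (by coprimeness and the fact that $\Phi(V)$ is nilpotent, so the fibers are blocks permuted freely) to a union of regular orbits on $V$; more carefully, if $X$ acts faithfully on $V$ it acts faithfully on $V/\Phi(V)$ when $\gcd(|X|,|V|)=1$, because the kernel of the action on $V/\Phi(V)$ would be a coprime group acting trivially on $V/\Phi(V)$, hence trivially on $V$. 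So it suffices to find two regular orbits of $X$ on $V$ elementary abelian, and then the bound $|X| \le (|V|-1)/2$ follows by counting: two disjoint regular orbits use up $2|X|$ nonidentity... wait, actually they use $2|X|$ points total, none of which is $0$ if $X$ acts without fixed points off $0$ — one needs to be slightly careful that $0$ is a fixed point, so the two regular orbits lie in $V \setminus \{0\}$, giving $2|X| \le |V| - 1$.

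Next, for $V$ elementary abelian, I would decompose $V$ as an $X$-module. Write $V = V_1 \oplus \cdots \oplus V_k$ into homogeneous components or, better, induct on the structure: if $V = V_1 \oplus V_2$ is a nontrivial $X$-invariant decomposition, let $X_i$ be the image of $X$ in its action on $V_i$ (so $X$ embeds in $X_1 \times X_2$). By induction each $X_i$ has a regular orbit on $V_i$ — in fact at least two, or we handle the base case separately — and one assembles a regular orbit of $X$ on $V$ by choosing $v_i \in V_i$ in a regular $X_i$-orbit; the stabilizer of $(v_1,v_2)$ in $X$ is contained in (stabilizer of $v_1$) $\cap$ (stabilizer of $v_2$) which maps to $1$ in each $X_i$, hence is trivial since $X \hookrightarrow X_1\times X_2$. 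Getting \emph{two} regular orbits from this product construction is easy once each factor has at least two (or even one plus enough room). So the crux is the \textbf{indecomposable / irreducible base case}: $V$ is an irreducible faithful $\F_p[X]$-module with $X$ nilpotent and $p \nmid |X|$.

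For the irreducible case the standard tool is Clifford theory combined with the structure of nilpotent groups. Since $X$ is nilpotent, $X = \prod_q X_q$ over primes $q$ dividing $|X|$, and a faithful irreducible module for a nilpotent group over $\F_p$ is induced from a module for a subgroup whose restriction to the relevant abelian normal subgroups is homogeneous; ultimately one is reduced to $X$ acting on $V$ with $V$ a faithful module such that $X$ has a cyclic (or abelian of small rank) normal subgroup acting irreducibly after extension of scalars. The key quantitative input is the classical fact (essentially Hall--Higman type, or an easy direct argument) that a nilpotent linear group $X \le \GL(V)$ of order coprime to $\mathrm{char}$ that acts faithfully and irreducibly satisfies $|X|^2 < |V|$ unless $V$ is very small, and in the small exceptional configurations one checks two regular orbits exist by hand. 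More precisely I would argue: the number of vectors in $V$ lying in \emph{some} proper $\F_p$-subspace fixed by a nontrivial element of $X$ is at most $\sum_{1 \ne x \in X} |C_V(x)|$, and for $X$ irreducible nilpotent and coprime, $|C_V(x)| \le |V|^{1/2}$ for $x \ne 1$ (the fixed space of a noncentral... here one uses that $V$ is a free module over $\F_p[Z]$ for $Z$ a suitable cyclic subgroup, or a direct eigenvalue count), so the number of vectors \emph{not} in a regular orbit is at most $|V|^{1/2}|X| + $ (smaller terms), and $2|X| + |V|^{1/2}|X| < |V|$ when $|V|$ is large, handled against $|X| \le |V|^{1/2}$-type bounds.

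The main obstacle I expect is precisely the careful handling of the irreducible base case and its small exceptions: the inequality $2|X| \le |V| - 1$ is tight for examples like $X$ cyclic of order $(p-1)/2$ acting on $\F_p$ (giving exactly two regular orbits — the two cosets of $X$ in $\F_p^\times$), so the argument must be sharp enough not to lose a factor, which rules out crude bounds and forces one to use the exact structure (semilinear groups $\Gamma L_1$, tensor-induced configurations) and to enumerate the handful of cases where $|X|^2 \ge |V|/2$ or so. A clean way to organize this, which I would adopt, is: (i) reduce to $V$ irreducible and $X$ acting faithfully; (ii) use that an irreducible faithful module for nilpotent $X$ coprime to $p$ makes $X$ a subgroup of $\Gamma L_1(p^m)$ tensored up, or more simply embed $X$ into a group with a large abelian normal subgroup $A$ such that $V$ is a free $\F_p[A]$-module, giving $|C_V(x)| \le |V|/|A|$-type control; (iii) count and conclude, treating $|X| \le$ a small bound by listing. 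This keeps the classification of finite simple groups out entirely, as the paper requires, since everything rests only on linear/representation-theoretic facts about solvable (indeed nilpotent) linear groups.
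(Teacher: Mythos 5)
Your outer reductions match the paper's: pass to $V/\Phi(V)$ (faithfulness is preserved by coprimeness), split a reducible module as $V_1\oplus V_2$ and assemble regular orbits componentwise, and reduce everything to the irreducible base case. But that base case is exactly where your argument has a genuine gap. Your proposed counting bound --- at most $\sum_{1\neq x\in X}|C_V(x)|\le |X|\,|V|^{1/2}$ non-regular vectors, hence a regular vector once $|X|(|V|^{1/2}+2)<|V|$ --- requires roughly $|X|<|V|^{1/2}$, whereas the lemma must cover (and is tight for) configurations such as $X$ cyclic of order $(p^m-1)/2$ inside $\mathbb{F}_{p^m}^{\times}$, or more generally $X$ inside a semilinear group, where $|X|$ is of the order of $|V|$. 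You acknowledge this and defer to "enumerating the handful of cases where $|X|^2\ge |V|/2$," but that enumeration is the entire content of the lemma in the hard range: it is not a finite list (it includes all fields $\mathbb{F}_{p^m}$ and all tensor-induced refinements), and no argument is given for it. As written, the proof establishes the result only when $|X|$ is small relative to $|V|$, which is the easy regime. A secondary unproved assertion is the claim $|C_V(x)|\le |V|^{1/2}$ for all $1\ne x\in X$; this needs justification even granting irreducibility and nilpotence.

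For contrast, the paper avoids any counting and any case enumeration. After reducing to $V$ absolutely irreducible over $F=\mathrm{End}_X(V)$, it tensor-decomposes $V=V_1\otimes\cdots\otimes V_m$ along the Sylow subgroups of the nilpotent group $X$, reducing to $X$ an $r$-group; then it inducts on $\dim_F V$ using that a non-linear absolutely irreducible module for an $r$-group is induced from an index-$r$ normal subgroup $M$, writes $V=W\oplus\cdots\oplus W$, takes $v\in W$ regular for each projection $M_i$ by induction, and observes that the $2^r$ vectors $(\varepsilon_1 v,\ldots,\varepsilon_r v)$ with $\varepsilon_i=\pm 1$ lie in $2^r$ distinct regular $M$-orbits of which the cyclic permutation fixes only $2$, yielding at least $(2^r-2)/r\ge 2$ regular $X$-orbits. (The paper also gets "two orbits from one" for free via $v\mapsto -v$, using that $|X|$ and $|V|$ are odd; your proposal instead tries to propagate "two" through the induction, which is workable but unnecessary.) If you want to complete your route, you would need either to carry out the $\Gamma L_1$-type and tensor-induced analysis in full, or to replace the counting step by an inductive construction of the kind the paper uses.
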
  

\begin{proof}   Note that it suffices to prove the existence of single regular orbit for $X$ since if $Xv$ is a regular orbit, then 
so is $Xv^{-1} \ne Xv$.   Since $X$ and $V$ have coprime orders, we can pass to $V/\Phi(V)$ and so assume that $V$ is a direct product
of elementary Abelian $p$-groups.   We now use additive notation for $V$.   If $V$ is not irreducible as an $X$-module, write $V=V_1 \oplus V_2$ with each $V_i$ invariant under $X$. 
Then $v_1 + v_2$ with $v_i \in V_i$ is contained in a regular orbit if each $v_i$ is (for $X/X_i$ where $X_i$ is the kernel of $X$ acting on $V_i$).   

So we may assume that $V$ is an irreducible $\mathbb{Z} /p\mathbb{Z} X$-module for some odd prime $p$ which does not divide $|X|$.   Let $F = \mathrm{End}_X(V)$.   Then
$F$ is a field of size $q=p^a$ for some $a$.  We view $V$ as an absolutely irreducible $FX$-module (the underlying set $V$ has not changed).   Thus, we can write 
$V=V_1 \otimes \ldots \otimes V_m$ where $V_i$ is an absolutely irreducible $kS_i$-module with $S_i$ a Sylow $r_i$-subgroup of $X$ (with the $r_i$ the primes dividing $|X|$).
Thus, it suffices to prove the result when $X$ is an $r$-group for some odd prime $r \ne p$.   

We now proceed by induction on $\dim_F V$.  If $\dim V=1$, this is clear (using that $q > 2|X|$).
If ${\rm dim}V >1,$ then since $V$ is absolutely irreducible, it is induced from an irreducible $kM$-module $W$ where $M$ is a maximal normal subgroup of $X$ of index $r$.
So we may identify $V$ with $W \oplus \ldots \oplus W$ where $M$ acts on each of the $r$ copies of $W$ and if $x \in X \setminus{M}$, $x$ permutes the $r$ copies of $W$ cyclically.
We may also assume that $x$ has order $r$ (otherwise any stabilizer will intersect $M$ nontrivially).

We may enlarge $M$ and assume that $M= M_1 \times \ldots \times M_r$ where $M_i$ is the projection of $M$ acting on the $i$th copy of $W$ (so $X \cong M_1 \wr \mathbb{Z}/r\mathbb{Z}$).
By induction, there exists $v \in W$ so that $M_1v$ is a regular orbit and (changing notation if necessary) we may assume that $v$  generates a regular orbit for $M_i$ as well.
Thus, $M(v, \ldots, v)$ is a regular orbit for $M$. 
Consider the set of $2^r$ vectors of the form $(\varepsilon_{1} v, \varepsilon_{2}v, \ldots, \varepsilon_{r}v )$ in $V,$ where each $\varepsilon_{i} \in \{1,-1\}$.  Note that each of these vectors generates a regular orbit for $M$ 
(for if $1 \ne y \in M$ fixes one of these vectors, then $y^2$ fixes $(v,\ldots,v)$ whence $y^2=y=1$).   The same argument shows that all of
these orbits are distinct for $M$.  Clearly, $x$ permutes this set of size $2^r$ with precisely $2$ fixed points.   Thus, $x$ fixes none of the other
$2^r-2$ orbits of $M$ and so we have produced at least $\frac{2^r-2}{r}$ regular orbits for $X$.
\end{proof}

\begin{lemma} \label{spare reg} Let $G$ be a finite group of odd order. Then 
$$[F_{2}(G):F(G)] \leq \frac{|F(G)|-1}{2}.$$
\end{lemma}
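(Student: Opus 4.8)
The plan is to reduce to a situation where Lemma~\ref{nil-reg} applies, by realizing $F_2(G)/F(G)$ as a coprime group of automorphisms acting on a suitable nilpotent target built from $F(G)$. First I would set $F = F(G)$ and note that, since $G$ has odd order, $F$ is nilpotent of odd order; write $\bar G = G/\Phi(F)C_G(F/\Phi(F))$ — or more carefully, pass to the action on $V = F/\Phi(F)$, which is a nilpotent (indeed elementary-abelian-by-parts) group of odd order. The key general fact is that $C_G(F) \leq F$ in any finite group, and more to the point that $C_G(F/\Phi(F))$ reduces into $F$ as well when $G$ is coprime-structured here; so $F_2(G)/F$ embeds into ${\rm Aut}(V)$ via its action on $V = F/\Phi(F)$, and this action is faithful on the relevant part. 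Since $\gcd(|F_2(G)/F|, |V|)$ need not literally be $1$ across all primes, I would decompose $V$ into its Sylow components $V_p$ for the primes $p$ dividing $|F|$ and study the action of the relevant $p'$-part of $F_2(G)/F$ on each $V_p$.

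Next, the core step: $F_2(G)/F$ is nilpotent (it equals $F(G/F)$), of odd order. For each prime $p \mid |F|$, let $X_p$ be the image of the $p'$-Hall subgroup of $F_2(G)/F$ in its action on $V_p = O_p(F)/\Phi(O_p(F))$. This $X_p$ is nilpotent of odd order, coprime to $|V_p|$, and acts faithfully on the elementary abelian $p$-group $V_p$ (the $p$-part of $F_2(G)/F$ acts trivially on $V_p$ since $F_2(G)$ is nilpotent and $O_p(F_2(G)) = O_p(F)$ — one must check the $p$-part contributes nothing to the faithful quotient here, which is where nilpotence of $F_2(G)$ is used). By Lemma~\ref{nil-reg}, $|X_p| \leq \frac{|V_p|-1}{2}$. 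Assembling over all $p$ and using that $F_2(G)/F$ embeds into $\prod_p {\rm Aut}(V_p)$ with the $p$-part landing faithfully only in the factors for primes $q \neq p$, one obtains $[F_2(G):F] \leq \frac{|F|-1}{2}$, since $|V| = \prod_p |V_p|$ divides $|F|$ and the product of the $\frac{|V_p|-1}{2}$ is at most $\frac{\prod|V_p| - 1}{2} \leq \frac{|F|-1}{2}$.

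The main obstacle I anticipate is the bookkeeping in the previous paragraph: $F_2(G)/F$ is a single nilpotent group, but Lemma~\ref{nil-reg} wants a \emph{coprime} faithful action, so one genuinely has to split $F_2(G)/F$ prime-by-prime against the Sylow structure of $F$ and then recombine the bounds. The recombination inequality $\prod_p \frac{|V_p|-1}{2} \leq \frac{\prod_p |V_p| - 1}{2}$ is elementary (for positive integers $n_i \geq 3$ odd, $\prod \frac{n_i - 1}{2} \leq \frac{\prod n_i - 1}{2}$, easily by induction), but one must also handle the subtlety that a $p$-element of $F_2(G)/F$ could in principle act nontrivially on $V_q$ for $q \neq p$ — this is fine, it just means the $p$-part is absorbed into $X_q$ — and the complementary fact that the faithful action of $F_2(G)/F$ on $V = \bigoplus V_p$ forces the product of the $|X_p|$ to dominate $[F_2(G):F]$. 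Care is also needed to confirm $C_{F_2(G)}(F/\Phi(F)) = F$: this follows because any element of $F_2(G)$ centralizing $F/\Phi(F)$ and lying in a $p'$-part acts trivially on $O_p(F)$ by coprime action plus the $\Phi$-argument, hence centralizes $F$ entirely, hence lies in $F$. Once that identification is in place, the faithfulness needed for Lemma~\ref{nil-reg} is secured and the proof closes.
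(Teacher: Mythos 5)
Your overall strategy---reduce to Lemma \ref{nil-reg} by passing to a Frattini quotient and splitting the action prime by prime---is in the spirit of the paper's proof, but your execution has a genuine gap at the recombination step, and the intermediate claims you offer in its support are false. You write that ``the $p$-part of $F_2(G)/F$ acts trivially on $V_p$ since $F_2(G)$ is nilpotent.'' First, $F_2(G)$ is not nilpotent in the cases of interest (that is the whole point of the lemma); second, the conclusion itself can fail. For instance, let $F = W \times Q$ with $W$ elementary abelian of order $p^2$ and $Q$ of order $q$ where $p \mid q-1$, and let $x$ of order $p$ act on $W$ as a nontrivial transvection and on $Q$ nontrivially; then $G = F \rtimes \langle x \rangle$ has $F(G)=F$, $F_2(G)=G$, and the $p$-element $xF$ acts nontrivially on $V_p = W$. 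Consequently, faithfulness of $F_2(G)/F$ on $\bigoplus_p V_p$ does \emph{not} by itself yield $[F_2(G):F] \le \prod_p |X_p|$: a priori a nontrivial $p$-element of $F_2(G)/F$ could act nontrivially on $V_p$ but trivially on every $V_q$ with $q \ne p$, in which case it dies in every $X_q$ and your product undercounts. Ruling this out is the missing idea. It is true, but needs an argument: such an element lifts to a $p$-element $y$ of $F_2(G)$ which centralizes $O_{p'}(F)$ (coprime action plus the Frattini argument applied to each $O_q(F)$, $q \ne p$); then $C_{S}(O_{p'}(F))$, where $S$ is the preimage of $O_p(F_2(G)/F)$, is a nilpotent normal subgroup of $G$ containing $y$, hence lies in $F(G)$, forcing $y \in F$. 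A variant of the same argument---not the coprime one you sketch---is also what is needed to prove $C_{F_2(G)}(F/\Phi(F)) = F$, since the kernel of ${\rm Aut}(O_p(F)) \to {\rm Aut}(V_p)$ is a $p$-group rather than trivial, so the obstruction again comes from $p$-elements, exactly where coprimality is unavailable.

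The paper sidesteps all of this by taking the Frattini quotient of the whole group rather than of $F$ alone: after replacing $G$ by $F_2(G)$ and then by $G/\Phi(G)$ (using $F(G/\Phi(G)) = F(G)/\Phi(G)$, which only decreases $|F|$), the subgroup $F$ becomes a direct product of minimal normal subgroups $V_1 \times \cdots \times V_r$ of $G$, each an irreducible module; then $G/F$ embeds in $\prod_i G/C_G(V_i)$ because $\bigcap_i C_G(V_i) = C_G(F) = F$, and each $G/C_G(V_i)$ is \emph{automatically} a $p_i'$-group, being nilpotent and faithful irreducible on the $p_i$-group $V_i$. So there is no stray $p$-part to absorb, and Lemma \ref{nil-reg} applies directly to every factor. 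If you wish to keep your prime-by-prime Sylow decomposition you must supply the nilpotent-normal-subgroup argument above; otherwise the global Frattini quotient is the cleaner route.
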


\begin{proof}
The statement is unaffected if we replace $G$ by $G/\Phi(G),$ or if we replace $G$ by $F_{2}(G),$ 
so we may and do suppose that $\Phi(G) = 1$ and that $G$ has Fitting height two.

Now $F=F(G)$ is a direct product of minimal normal subgroups of $G$, so may be viewed as a completely reducible 
module for the nilpotent group $G/F.$ 

\medskip
Let $$F = V_{1} \times V_{2} \times \ldots \times V_{r}$$ where each $V_{i}$ is a minimal normal subgroup of $G$
and thus $V_{i}$ is a $p_{i}$-group for some prime $p_{i}.$                                        
Then the nilpotent group $G/F$ embeds (faithfully) in the nilpotent group 
$$G_{1} \times G_{2} \times \ldots \times G_{r},$$ where $$G_{i} = G/C_{G}(V_{i})$$ for each $i.$ 
We note that the nilpotent group $G_{i},$ which acts faithfully and  irreducibly on $V_{i},$ is a nilpotent $p_{i}^{\prime}$-group.
Now we may apply Lemma \ref{nil-reg} to conclude that $G_{i}$ has at least two regular orbits on $V_{i}$ for each $i.$ Now we have 
$$[G:F(G)] \leq |G_{1}| \times |G_{2}| \times \ldots \times |G_{r}| 
\leq \frac{|V_{1}|-1}{2} \times \ldots \times \frac{|V_{r}|-1}{2} \leq \frac{|F(G)|-1}{2}.$$ 

\end{proof}   
 
We remark that the upper bound given in this result is attained infinitely often, for example by every Frobenius group of order $\frac{q(q-1)}{2}$ with $q \equiv 3$ (mod $4$) a prime power.

The next result will be useful in the proof of Theorem \ref{linear fix}.

\begin{lemma} \label{Hall}
Let $A$ be a finite group of automorphisms of a finite solvable group $G$ with ${\rm gcd}(|G|,|A|) = 1.$ Let $\pi$ be a set of prime divisors of $|G|$. Then any $A$-invariant Hall $\pi$-subgroup of $G$ contains a Hall $\pi$-subgroup of $C_{G}(A).$ 
\end{lemma}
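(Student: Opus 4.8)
The plan is to combine Hall's theorem for solvable groups with the standard transitivity and extension properties of $A$-invariant Hall subgroups under coprime action. Write $C = C_G(A)$. Since $G$ is solvable, so is $C$, so $C$ has a Hall $\pi$-subgroup by Hall's theorem; and since $A$ acts trivially on $C$, every Hall $\pi$-subgroup of $C$ is automatically $A$-invariant. I would fix the given $A$-invariant Hall $\pi$-subgroup $H$ of $G$ and observe that $H \cap C$ is an $A$-invariant $\pi$-subgroup of $C$, hence (by the coprime form of Hall's theorem applied inside $C$) contained in some Hall $\pi$-subgroup $Q$ of $C$. The lemma then reduces to exhibiting \emph{some} Hall $\pi$-subgroup of $C$ that lies inside $H$: for then $H \cap C$ is a $\pi$-subgroup of $C$ trapped between a Hall $\pi$-subgroup and $Q$, so it is itself Hall.

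To produce such a subgroup I would view $Q$ as an $A$-invariant $\pi$-subgroup of $G$ and use two standard facts about coprime action on solvable groups: first, that every $A$-invariant $\pi$-subgroup of $G$ lies in an $A$-invariant Hall $\pi$-subgroup, giving $Q \le H'$ for some $A$-invariant Hall $\pi$-subgroup $H'$ of $G$; second, that any two $A$-invariant Hall $\pi$-subgroups of $G$ are conjugate by an element of $C_G(A)$, giving $H' = H^{c}$ with $c \in C$. Then $Q^{c^{-1}} \le H$, and since $c^{-1} \in C$ normalizes $C$, the group $Q^{c^{-1}}$ is again a Hall $\pi$-subgroup of $C$ — the subgroup we want.

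If one prefers a self-contained argument, I would instead induct on $|G|$ using only Lemma \ref{cc} and the coprime Schur--Zassenhaus theorem. Choose a minimal $A$-invariant normal subgroup $N$ of $G$, necessarily an elementary abelian $q$-group (with $N = G$ trivial). If $q \in \pi$ then $N \le H$, and I would pass to $G/N$, apply induction, and lift back via $C_{G/N}(A) \cong C_G(A)/C_N(A)$, finishing with a short count of $\pi$-parts. If $q \notin \pi$, I would work in $G/N$ to locate the relevant $A$-invariant $\pi$-subgroup, pull it back to an $A$-invariant $K \le G$ with $K/N$ a $\pi$-group, split $K = N \rtimes L$ with $L$ an $A$-invariant complement (coprime Schur--Zassenhaus), note via Lemma \ref{cc} that $A$ acts trivially on $L$ so that $L \le C$, and then conjugate $L$ into $H$ by an element of $C_{HN}(A) \le C$.

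The main obstacle is the coprime-Hall input used in the second paragraph: the existence of an $A$-invariant Hall $\pi$-subgroup containing a prescribed $A$-invariant $\pi$-subgroup, and the $C_G(A)$-conjugacy of $A$-invariant Hall $\pi$-subgroups. These are the only non-routine ingredients; once they are in hand (or re-derived through the induction sketched above, whose delicate point is in turn the $q \notin \pi$ step), everything else is bookkeeping with $\pi$-parts and applications of Lemma \ref{cc}.
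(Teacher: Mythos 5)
Your main argument is correct and is essentially the paper's own proof: both take a Hall $\pi$-subgroup of $C_G(A)$, observe it is $A$-invariant, embed it in an $A$-invariant Hall $\pi$-subgroup of $G$, and use Glauberman's theorem that $A$-invariant Hall $\pi$-subgroups are conjugate by elements of $C_G(A)$ to move it into the given one. The detour through $H\cap C$ in your first paragraph and the inductive alternative in your third are unnecessary; your second paragraph alone is the proof.
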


\begin{proof} 
$G$ has an $A$-invariant Hall $\pi$-subgroup, $K$ say, and by a Theorem of Glauberman, any other $A$-invariant Hall $\pi$-subgroup of $G$ is conjugate to $K$ via an element of $C_{G}(A).$ Also, every $A$-invariant $\pi$-subgroup of $G$ is contained in an $A$-invariant Hall $\pi$-subgroup of $G$. A Hall $\pi$-subgroup $L$ of $C_{G}(A)$ is certainly $A$-invariant, so is contained in some $A$-invariant Hall $\pi$-subgroup of $G,$ which has the form $K^{x}$ for some $x \in C_{G}(A).$ Then $L^{x^{-1}}$ is a Hall $\pi$-subgroup of $C_{G}(A)$ contained in $K.$
\end{proof}

We will also need a slight strengthening of the fact that (as noted during the course of the proof of Lemma 3 of \cite{GR})  when $G$ is a finite solvable group of Fitting height at most two, then we have $k(G) \leq |F(G)|.$ 

\begin{lemma} \label{Fitting height two Hall} Let $G$ be a finite solvable group and $\pi$ be a set of primes.
Suppose that $O_{\pi^{\prime}}(G) = 1$ and that $G$ has a normal Hall $\pi$-subgroup $H$ of Fitting height at most two. Then $k(G) \leq |F(G)| = |F(H)|.$ 
\end{lemma}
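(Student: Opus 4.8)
The plan is to compute $k(G)$ from the ordinary character theory of the normal Hall subgroup $H$, and then to derive $k(G)\le|F|$ from the Fitting-height-$\le2$ estimate $k(X)\le|F(X)|$ recalled above, by an induction on $|G|$.

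First I would settle the structural preliminaries. Since $F(H)$ is characteristic in $H\trianglelefteq G$ it is a normal nilpotent subgroup of $G$, so $F(H)\le F(G)$; conversely $O_{\pi^{\prime}}(F(G))\le O_{\pi^{\prime}}(G)=1$ forces $F(G)$ to be a $\pi$-group, and a normal Hall $\pi$-subgroup contains every $\pi$-subgroup of $G$ (if $P$ is a $\pi$-subgroup, $PH/H$ is both a $\pi$- and a $\pi^{\prime}$-group), so $F(G)\le F(H)$; hence $F(G)=F(H)=:F$. By Schur--Zassenhaus, $G=H\rtimes K$ with $K$ a Hall $\pi^{\prime}$-subgroup, and $\gcd(|H|,|K|)=1$. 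The hypothesis $O_{\pi^{\prime}}(G)=1$ is equivalent to faithfulness of the action of $K$ on $H$: a Hall $\pi^{\prime}$-subgroup of $C_{G}(H)$ centralises $H\supseteq F$, hence lies in $C_{G}(F)\le F$ by solvability and is therefore trivial, so $C_{G}(H)$ is a $\pi$-group and $K\cap C_{G}(H)=1$; the same argument gives $C_{K}(F)=1$.

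Next I would pass to $H$. Because $\gcd(|H|,|K|)=1$ and $G$ is solvable, each $\theta\in\Irr(H)$ extends to its inertia group $I_{G}(\theta)=H\rtimes\bigl(K\cap I_{G}(\theta)\bigr)$, so by the Clifford correspondence and Gallagher's theorem $|\Irr(G\mid\theta)|=k\bigl(I_{G}(\theta)/H\bigr)=k\bigl(\Stab_{K}(\theta)\bigr)$. As inner automorphisms fix irreducible characters, the $G$-orbits on $\Irr(H)$ coincide with the $K$-orbits, and summing over them yields
$$k(G)=\sum_{[\theta]\in\Irr(H)/K}k\bigl(\Stab_{K}(\theta)\bigr).$$
Writing $k(\Stab_{K}(\theta))$ as a Burnside count and interchanging the order of summation — using Brauer's permutation lemma and, since $K$ acts coprimely on $H$, the ($C_{K}(x)$-equivariant) Glauberman correspondence $\Irr(H)^{\langle x\rangle}\cong\Irr(C_{H}(x))$, with Lemma \ref{cc} as the coprime-action input — one obtains the equivalent form
$$k(G)=\sum_{[x]\in\Cl(K)}\#\bigl\{C_{K}(x)\text{-orbits on }\Cl(C_{H}(x))\bigr\}.$$

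Finally I would bound this by $|F|$ by induction on $|G|$. The $x=1$ term is $\#\{K\text{-orbits on }\Cl(H)\}\le k(H)\le|F(H)|=|F|$, the last inequality being the Fitting-height-$\le2$ estimate; the identity $\sum_{x\in K}k(C_{H}(x))=|K|\cdot\#\{K\text{-orbits on }\Cl(H)\}$ (Brauer's permutation lemma) is the clean bookkeeping version of this. For $x\ne1$ the group $C_{H}(x)$ is a proper subgroup of $H$ (because $C_{K}(H)=1$), still of Fitting height at most two, and $|C_{H}(x)\rtimes C_{K}(x)|<|G|$, so the $x$-term is controlled by the inductive hypothesis applied to $C_{H}(x)\rtimes C_{K}(x)$; one then assembles the local contributions, the ground case $H=F$ being treated by an analogous but simpler induction (there using $k(F\rtimes K)=\sum_{[x]\in\Cl(K)}\#\{C_{K}(x)\text{-orbits on }\Cl(C_{F}(x))\}$ and, possibly, Lemma \ref{bw}). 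The main obstacle is precisely this assembly: the sections $C_{H}(x)\rtimes C_{K}(x)$ need not satisfy $O_{\pi^{\prime}}=1$, and $F(C_{H}(x))$ can properly contain $C_{F}(x)$, so the raw conclusion $k(\,\cdot\,)\le|F(\,\cdot\,)|$ is too weak to sum; the induction must be run on a sharper, orbit-sensitive inequality — comparing the number of $Q$-orbits on $\Cl(X)$ with the number of $Q$-orbits on a suitable $Q$-invariant section of $F(X)$ under a coprime action of $Q$ on a Fitting-height-$\le2$ group $X$, of which $k(X)\le|F(X)|$ is the case $Q=1$ — formulated so that it is inherited by these sections and telescopes over $\Cl(K)$ to at most $|F|$. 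Pinning down that formulation is the heart of the matter.
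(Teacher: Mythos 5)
There is a genuine gap, and you in fact name it yourself: the final assembly step is never carried out. Your identity $k(G)=\sum_{[x]\in\Cl(K)}\#\{C_{K}(x)\text{-orbits on }\Cl(C_{H}(x))\}$ cannot be bounded term by term, since the $x=1$ term alone can already be as large as $k(H)\le|F|$ while every other term is a positive integer; so the sum exceeds $|F|$ unless the bound is redistributed by a strictly stronger, orbit-counting inequality. That stronger inequality (comparing $Q$-orbits on $\Cl(X)$ with a $Q$-invariant piece of $F(X)$ under coprime action) is exactly what you say must be ``pinned down,'' but it is neither formulated precisely nor proved, and it is not a routine consequence of $k(X)\le|F(X)|$ for Fitting height two; the difficulties you list (the sections $C_{H}(x)\rtimes C_{K}(x)$ losing the hypothesis $O_{\pi'}=1$, and $F(C_{H}(x))$ exceeding $C_{F}(x)$) are real and are not resolved. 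So the proposal is an outline of a possible strategy, not a proof; it is also missing any ingredient of the strength actually needed at the bottom of the induction.

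The paper's argument is different and complete, and notably does not go through Clifford theory over $H$ at all. One reduces to $\Phi(G)=1$, so that $F=F(G)$ is a direct product of minimal normal subgroups of $G$ and is completely reducible as a $G/F$-module; for a minimal normal $p$-subgroup $V\le F$ one checks (using that $H/F$ is nilpotent and normal in $G/F$, and $G/H$ is a $\pi'$-group) that $G/C_{G}(V)$ is a $p'$-group. Choosing a maximal subgroup $M$ with $V\not\le M$ gives $C_{G}(V)=V\times C_{M}(V)$ with $C_{M}(V)\lhd G$, so $G/C_{M}(V)\cong V\rtimes\bigl(G/C_{G}(V)\bigr)$ and the solution of the $k(GV)$-problem for solvable groups yields $k\bigl(G/C_{M}(V)\bigr)\le|V|$. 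Since $U=F(C_{M}(V))$ is a $G$-invariant complement to $V$ in $F$ and $C_{M}(V)$ inherits the hypotheses, induction gives $k(C_{M}(V))\le|U|$, and $k(G)\le k\bigl(G/C_{M}(V)\bigr)\,k(C_{M}(V))\le|V||U|=|F|$. The $k(GV)$-theorem is the decisive input here; your plan has no substitute for it, which is why your induction does not close.
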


\begin{proof}
Notice that $F(G)$ is a $\pi$-group. We may suppose that $\Phi(G) = 1,$ so that $F = F(G)$ is a direct product of various minimal normal subgroups of $G,$ so is completely reducible as $G/F$-module. Let $V,$ a $p$-group for some prime $p \in \pi,$ be one of these.
We note that $G/C_{G}(V)$ is a $p^{\prime}$-group. Firstly, $H/F$ is a nilpotent group acting completely reducibly on $V$ (since $H/F \lhd G/F $), so that $H/C_{H}(V)$ is a $p^{\prime}$-group. Also, $G/H$ is a $\pi^{\prime}$-group, so that $G/C_{G}(V)$ is a $p^{\prime}$-group, as claimed.

Let $M$ be a maximal subgroup of $G$ which does not contain $V$ (such exists as $\Phi(G) = 1).$  Note that 
$M \cap V = 1$ by minimality of $V$. Then $$C_{G}(V) = V \times C_{M}(V)$$ and $C_{M}(V) \lhd G.$ Then $G/C_{M}(V)$ is the semidirect product of 
$V$ with $$M/C_{M}(V) \cong G/C_{G}(V),$$  so we have $$k(G/C_{M}(V)) \leq |V|$$ by the solution of the $k(GV)$-problem (for solvable groups). If $V = F,$ we are done.

Otherwise, we know that $U = F(C_{M}(V))$ is a $G$-invariant complement to $V$ in $F,$ since $C_{M}(V) \lhd G.$
We may suppose by induction that $k(C_{M}(V))  \leq |U|$ since $C_{M}(V)$ satisfies the hypotheses of the 
lemma (as $C_{M}(V) \lhd G$). Since we have $$k(G) \leq k(G/C_{M}(V))k(C_{M}(V)),$$ we do have $k(G) \leq |U||V| = |F|,$ as required.
\end{proof}

We return to the main theme of the paper to close this section. We recall that  the Frobenius-Schur indicator of a complex irreducible character $\chi$, denoted by $\nu(\chi)$, takes value $1$ if $\chi$ may be afforded by a real representation, $-1$ if $\chi$ is real-valued, but may not be afforded  by any real representation, and $0$ if $\chi$ is not real-valued. The following well-known formula allows us to count involutions using characters.

\begin{lemma} \label{invcount} The number of involutions in the finite group $G$ of even order is
$$\sum_{1 \neq \chi \in {\rm Irr}(G)} \nu(\chi) \chi(1).$$
\end{lemma}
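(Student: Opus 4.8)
The plan is to derive this from the classical Frobenius--Schur formula $\nu(\chi) = \frac{1}{|G|}\sum_{g \in G}\chi(g^{2})$, together with the decomposition of the regular character $\rho_{G} = \sum_{\chi \in {\rm Irr}(G)}\chi(1)\chi$. For the first of these I would simply cite a standard reference (e.g.\ Isaacs, \emph{Character Theory of Finite Groups}) rather than reprove that the quantity defined there takes the three values $0,\pm 1$ described before the lemma; the only property actually needed is the displayed sum formula for $\nu(\chi)$.

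First I would multiply the Frobenius--Schur formula by $\chi(1)$, sum over $\chi \in {\rm Irr}(G)$, and interchange the two summations, obtaining
$$\sum_{\chi \in {\rm Irr}(G)}\nu(\chi)\chi(1) \;=\; \frac{1}{|G|}\sum_{g \in G}\Big(\sum_{\chi \in {\rm Irr}(G)}\chi(1)\chi(g^{2})\Big) \;=\; \frac{1}{|G|}\sum_{g \in G}\rho_{G}(g^{2}).$$
Since $\rho_{G}(x)$ equals $|G|$ when $x = 1$ and $0$ otherwise (a standard fact about the regular character, equivalent to column orthogonality of the character table applied to the columns indexed by $1$ and $g^{2}$), the inner expression vanishes unless $g^{2} = 1$, so the right-hand side collapses to $|\{g \in G : g^{2} = 1\}| = 1 + |I(G)|$, where $I(G)$ denotes the set of involutions of $G$.

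Finally I would split off the trivial character on the left: as $\nu(1_{G}) = 1$ and $1_{G}(1) = 1$, the left-hand side equals $1 + \sum_{1 \neq \chi \in {\rm Irr}(G)}\nu(\chi)\chi(1)$, and comparing the two sides yields $|I(G)| = \sum_{1 \neq \chi \in {\rm Irr}(G)}\nu(\chi)\chi(1)$, which is the assertion. I do not expect any genuine obstacle here: the sole substantive input is the Frobenius--Schur formula, and the computation is entirely routine — the one point meriting slight care is recognizing $\sum_{\chi}\chi(1)\chi$ as the (real, integer-valued) regular character, so that no complex conjugation intervenes when evaluating at $g^{2}$.
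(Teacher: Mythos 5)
Your proof is correct: it is the standard derivation of the involution-counting formula from the Frobenius--Schur indicator formula and the regular character, and every step (interchanging the sums, evaluating $\rho_G(g^2)$, splitting off the trivial character) is sound. The paper itself states this lemma as a well-known fact and gives no proof, so your argument simply supplies the canonical one.
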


\medskip
Now let $G$ be a finite group with more than one conjugacy class of involutions. Let $t$ and $u$ be non-conjugate involutions of $G$. Then (as Brauer-Fowler noted in \cite{BF}), 
for each $g \in G$,  there is some involution $z \in C_{G}(t)$ such that $u^{g} \in C_{G}(z)$ (just take $z$ to be an involution central in the dihedral group 
$\langle t,u^{g} \rangle$,  (such an involution exists because $\langle tu^{g} \rangle$ is a cyclic normal subgroup of even order of $ \langle t,u^{g} \rangle ))$.

Let $c$ be the maximum order of the centralizer of an involution in $G.$ Then it was further noted in \cite{BF}
that 
$$[G:C_{G}(u)] < c^{2},$$ so that certainly $|G| < c^{3}$.   For note that each conjugate of $u$ is in the union 
$$\bigcup_{z \in C_{G}(t)^{\#}:z^{2} = 1} C_{G}(z)^{\#}.$$ 

We record this result of Brauer and Fowler as:

\begin{lemma} \label{more than 1}   If $G$ is a finite group with at least two conjugacy classes of involutions, then
$|G| < |C_G(u)|^3$ for some involution $u \in G$.
\end{lemma}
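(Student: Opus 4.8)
The plan is to carry out the classical Brauer--Fowler counting argument recalled in the paragraphs preceding Lemma~\ref{more than 1}, arranging the bookkeeping so that it lands on the involution of largest centralizer. Set $c = \max\{|C_{G}(z)| : z \in G,\ z^{2} = 1\}$ and fix an involution $u \in G$ with $|C_{G}(u)| = c$. Since $G$ has at least two conjugacy classes of involutions, there is an involution $t \in G$ not $G$-conjugate to $u$. The goal is to establish $[G:C_{G}(u)] < c^{2}$; granting this, $|G| = [G:C_{G}(u)]\,|C_{G}(u)| < c^{2}\cdot c = c^{3} = |C_{G}(u)|^{3}$, as required.

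The one step that needs any thought is the dihedral subgroup observation. For $g \in G$ put $D = \la t, u^{g}\ra$; being generated by two involutions, $D$ is dihedral, with cyclic ``rotation'' subgroup $\la tu^{g}\ra$ of some order $n$. If $n = 1$ then $t = u^{g}$, contrary to the choice of $t$; and if $n$ is odd with $n \ge 3$, then $\la tu^{g}\ra$ has odd order, so $t$ and $u^{g}$ are reflections in $D$, and all reflections of $D$ form a single $D$-conjugacy class, forcing $t$ to be $G$-conjugate to $u^{g}$ and hence to $u$, again a contradiction. Therefore $n$ is even (the case $n = 2$, i.e.\ $D$ a Klein four-group, being allowed), so $z := (tu^{g})^{n/2}$ is an involution lying in $Z(D)$. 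In particular $z \in C_{G}(t)$ and $u^{g} \in C_{G}(z)$. Consequently every conjugate of $u$ lies in $\bigcup_{z \in \mathcal{I}}\bigl(C_{G}(z) \setminus \{1\}\bigr)$, where $\mathcal{I}$ is the set of involutions contained in $C_{G}(t)$.

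It remains to count. Each $z \in \mathcal{I}$ satisfies $|C_{G}(z)| \le c$, and $|\mathcal{I}| \le |C_{G}(t)| - 1 \le c - 1$ since $1 \notin \mathcal{I}$. Hence
$$[G:C_{G}(u)] = |u^{G}| \le \sum_{z \in \mathcal{I}} \bigl(|C_{G}(z)| - 1\bigr) \le (c-1)^{2} < c^{2},$$
the strict inequality using $c \ge 2$ (which holds because $G$ has even order). Combined with $|C_{G}(u)| = c$, this gives $|G| < c^{3} = |C_{G}(u)|^{3}$.

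I expect no genuine obstacle: the only place care is needed is the parity analysis of $n$, where the non-conjugacy of $t$ and $u$ in $G$ is used twice---once to rule out $n = 1$ (ensuring $z \neq 1$) and once to rule out odd $n \ge 3$ (ensuring $D$ has a central involution $z$)---together with the routine verification that the final estimate is strict.
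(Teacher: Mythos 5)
Your proposal is correct and follows essentially the same route as the paper: the classical Brauer--Fowler device of locating a central involution $z$ in the dihedral group $\langle t,u^{g}\rangle$, covering $u^{G}$ by the sets $C_{G}(z)^{\#}$ for involutions $z\in C_{G}(t)$, and counting to get $[G:C_{G}(u)]<c^{2}$ and hence $|G|<c^{3}$. The only differences are that you spell out the parity argument showing $\langle tu^{g}\rangle$ has even order and that you fix $u$ to realize the maximal centralizer order from the outset, both of which the paper leaves implicit.
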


\section{A general bound on fixed point spaces of involutions in linear actions} \label{linfix}

In this section, we prove Theorem \ref{linear fix} without recourse to the Classification of Finite Simple Groups. We suppose that $G$ is a finite irreducible subgroup of ${\rm GL}(V),$ where $V$ is a finite dimensional vector space over a field of characteristic $q$ and that $G$ contains more than one involution. We need to prove that $G$ contains an involution $t$ with $${\rm dim}(C_{V}(t)) \geq \frac{{\rm dim}(V)}{3}.$$ This is clear if $q=2,$ so suppose otherwise.

We remark that if an irreducible subgroup $X$ of ${\rm GL}(V)$ (for $V$ as above) contains a unique involution, then that involution is central in $X.$ 

If $G$ contains a Klein $4$-group $A,$ then the result follows from Lemma \ref{bw} if $q \neq 0$ or by consideration of  the multiplicities of the linear constituents of ${\rm Res}^{G}_{A}(\chi),$ where $\chi$ is the character afforded by $V$, when $q = 0$.

Hence we may suppose that $G$ contains no Klein $4$-subgroup, so that $G$ has cyclic or generalized quaternion
Sylow $2$-subgroups.  Then by the Brauer-Suzuki Theorem, we have $G = O_{2^{\prime}}(G)C_{G}(t)$  for any involution $t \in G.$ Set $N = O_{2^{\prime}}(G),$ and let $T = \langle t \rangle$ for $t \in G$ an involution. Set $M = [N,T]T \lhd G$ and note that we still have $G = MC_{G}(t).$

We claim that if suffices to assume that $G = M.$ For let $W$ be an irreducible summand of ${\rm Res}^{G}_{M}(V)$ (note that all these are $G$-conjugate by Clifford's Theorem, so since $G = C_{G}(t)M$ we see that the dimension of the $t$-fixed point space on $W$ is independent of the choice of the irreducible summand $W$. In particular, $t$ does not act as a scalar on $W$). Note also that ${\rm Res}^{G}_{M}(V)$ is completely reducible by Clifford's theorem since $M \lhd G.$  Let $X = M/C_{M}(W),$ which is isomorphic to a subgroup of ${\rm GL}(W)$ with a non-central involution $tC_{M}(W)).$

If $X$ is a proper section of $G$ then we may assume that the result is true for $X$ by induction. In that case, 
we have $${\rm dim}(C_{W}(t)) \geq \frac{{\rm dim}(W)}{3},$$ and then $${\rm dim}(C_{V}(t)) \geq \frac{{\rm dim}(V)}{3},$$ as desired.

Hence we may suppose that $G$ has a Sylow $2$-subgroup $T$ of order $2$ (generated by $t,$ say), and a normal $2$-complement $N$ with $N = [N,t].$ In particular, $G$ is now solvable. We may also suppose that there is no proper subgroup $Y$ (normal or not) of $G$ with $t \in Y$ and $G = YC_{G}(t).$ For if $G = YC_{G}(t),$ then $$[N,t] = [G,t] = [Y,t] \leq Y \cap N$$ and we have $G = [Y \cap N,t]C_{G}(t), $ in which case $G = LC_{G}(t)$ with 
$L = [Y \cap N,T]T \lhd G,$ which is a proper normal subgroup of $G$. As above, we may take an irreducible summand $W$ of ${\rm Res}^{G}_{L}(V)$ and work with $X = L/C_{L}(W),$ which is a proper section of $G$, noting as before that $t$ does not act as a scalar on $W,$ and deduce that $${\rm dim}(C_{W}(t)) \geq \frac{{\rm dim}(W)}{3},$$ and $${\rm dim}(C_{V}(t)) \geq \frac{{\rm dim}(V)}{3}.$$  Hence from now on, we suppose that there is no such proper subgroup $Y.$

Now we may reduce to the case that $V$ is a $\mathbb{C}G$-module affording the irreducible character $\chi$. This is routine if $q=0.$

In the case that $q \neq 0,$ (over some finite extension field, which may be chosen as ${\rm End}_{G}(V)$ if desired) we may write $V$ as a sum of Galois conjugate absolutely irreducible $G$-modules. Let $U$ be one of these, which may be chosen so that $|U| = |V|$. Then, since $G$ is solvable, $U$ lifts to a faithful irreducible $\mathbb{C}G$-module affording character $\chi.$ Furthermore, the dimension of the $t$-fixed point space on $U$ is $\frac{\chi(1) + \chi(t)}{2}$, which is independent of the lift chosen.

Then $$\frac{{\rm dim}(C_{U}(t))}{{\rm dim}(U)} = \frac{{\rm dim}(C_{V}(t))}{{\rm dim}(V)} = \frac{\chi(1) + \chi(t)}{2\chi(1)},$$ so the ratio of the dimension of the $t$-fixed point space to the dimension of the whole space is unchanged by passage to the complex representation affording $\chi.$

From now on, then, we suppose that $V$ is a $\mathbb{C}G$-module,  and we work with the character $\chi$ it affords. We still have $G = NT,$ where $[N,T] = N,$ and we recall that we are supposing that there is no proper subgroup $Y$ of $G$ with $t \in Y$ and $G = YC_{G}(t).$  To prove the desired result, it suffices to prove that $$\frac{\chi(1)+\chi(t)}{2} \geq \frac{\chi(1)}{3}.$$

Now we claim that we may suppose that $\chi$ is primitive. If not, then since $G$ is solvable, $\chi$ may be induced from an irreducible character $\mu$ of some maximal (but not necessarily normal) subgroup $M$ whose index is a power of a prime $r.$ If $M$ contains no conjugate of $t,$ then we have $\chi(t) = 0,$ so that $$\frac{\chi(1)+\chi(t)}{2} = \frac{\chi(1)}{2},$$ and we are done. Hence we may suppose that $t \in M$ and that $r$ is odd.

Whenever $t^{g} \in M$ for $g \in G,$ we have $t^{g} = t^{m}$ for some $m \in M,$ (since $T$ is a Sylow $2$-subgroup of $M$), so that $g \in C_{G}(t)M$ (which need not be a subgroup). It follows that $$\chi(t) = [C_{G}(t):C_{M}(t)]\mu(t),$$ while we know that $\chi(1) = [G:M]\mu(1).$

Now $M \cap N$ contains a Hall $r^{\prime}$-subgroup of $N,$ so by Lemma \ref{Hall}, we know that $C_{M \cap N}(t)$ contains a Hall $r^{\prime}$-subgroup of $C_{N}(t).$ Hence both $[G:M]$ and $[C_{G}(t):C_{M}(t)]$ are powers of $r,$  since $r$ is odd. It is elementary that $$[C_{G}(t):C_{M}(t)] \leq [G:M],$$ and in our case, the inequality is strict, since we know that $G \neq MC_{G}(t).$ However, both $[C_{G}(t):C_{M}(t)]$ and $[G:M]$ are powers of $r,$ so we now have $$r[C_{G}(t):C_{M}(t)] \leq [G:M].$$

In particular, we now have 
$$|\chi(t)| = [C_{G}(t):C_{M}(t)]|\mu(t)| \leq \frac{[G:M]\mu(1)}{r} = \frac{\chi(1)}{r}.$$
Hence we have $$\frac{\chi(1)+ \chi(t)}{2} \geq \frac{r-1}{2r}\chi(1) \geq \frac{\chi(1)}{3},$$ (since $r$ is odd) and we are done.

Hence we suppose from now on that $\chi$ is primitive. Then every Abelian normal subgroup of $G$ is cyclic and central. Now $T \not \leq F(G)$ since $t$ does not act as scalars on $V$ and $T \in {\rm Syl}_{2}(G).$ 
Hence $F = F(G) = F(N)$ has odd order. Since all characteristic Abelian subgroups of $F$ are cyclic, we see that there is a prime $p$ such that  $$P = \Omega_{1}(O_{p}(G))$$ is a non-Abelian extraspecial $p$-group with $Z(P) \leq [P,t] \neq 1.$ Note that since $\chi$ is primitive and faithful, but $G$ is non-Abelian, $G$ is not nilpotent.

By (Clifford's Theorem and) primitivity, all irreducible constituents of ${\rm Res}^{G}_{P}(\chi)$ are equal, say to $\theta.$ Note that $\theta$ extends to $PT$ (in just two possible ways in fact) by Glauberman correspondence, though we can see it directly here. Let $\psi$ be an irreducible constituent of ${\rm Res}^{G}_{PT}(\chi).$ Then $\psi$ restricts to $P$ as an integer multiple of $\theta$, but the multiple cannot be two (or more) using the character inner product. Hence there is at least one extension, $\psi, $ which does not vanish identically on the $2$-section of $t$ in $PT$ (which is just the coset $Pt$ in this situation). If we multiply $\psi$ with the linear character of order $2$ of $PT,$ we obtain another extension of $\theta$. One and only one of the two extensions has trivial determinant on restriction to $T$.

By Glauberman correspondence, there is an irreducible character $\eta$ of $C_{P}(t)$ 
such that $\psi(t) = \pm \eta(1).$ Now $\psi(t) \neq \pm \psi(1),$ since $1 \neq Z(P) \leq [P,t]$
and $\theta$ is a faithful character of $P.$ Since $\eta(1)$ and $\psi(1)$ are both powers of $p,$ we see that 
$\eta(1) \leq \frac{\psi(1)}{p}.$ Hence we have 
$$\frac{\psi(1)+\psi(t)}{2} \geq \frac{(p-1)\psi(1)}{2p}.$$

Since $\psi$ was an arbitrary irreducible constituent of ${\rm Res}^{G}_{PT}(\chi),$ we deduce that 
$$\frac{\chi(1)+\chi(t)}{2} \geq \frac{(p-1)\chi(1)}{2p} \geq \frac{\chi(1)}{3},$$ since $p \geq 3$.
The proof of Theorem \ref{linear fix} is now complete.

A somewhat related (though different) result may be found in Liebeck-Shalev \cite{Liebshal}, though our result applies (in particular) to solvable groups.

As a final note on this result, we remark that it is not true in general that under the same hypotheses one can find an involution
whose $-1$ eigenspace has dimension at least $(\dim V)/3$.  Consider $G= {\rm U}(3,3),$ which has a complex irreducible character $\chi$ of degree $7$ with $\chi(t)  = 3$ for every involution $t,$ whence the fixed space of $t$ has dimension $5$ and the $-1$ eigenspace of $t$ has dimension $2$ in the associated representation. Among finite simple groups, this is in fact the only counterexample (using the fact that any other non-Abelian finite simple group can be generated by $3$ involutions \cite{MSW}).

 \section{Almost Simple Groups} \label{almost simple} 

In this section, we prove some results about centralizers in quasisimple and almost simple groups.  

\begin{thm}  \label{as}   Let $G$ be a finite almost simple group.  Then we have $|C_G(t)|^3 > |G|$ for every involution $t \in G.$ 

\end{thm}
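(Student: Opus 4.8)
The proof will use the classification of finite simple groups. Write $S = F^{\ast}(G)$; then $S$ is a non-abelian simple group, $S \leq G \leq \mathrm{Aut}(S)$, and since $E(G)=S$ we have $F(G)=1$, so what must be shown is that $|C_G(t)|^{3} > |G|$ for every involution $t \in G$. I shall use freely that $C_S(t) = S \cap C_G(t) \leq C_G(t)$ and that $|G| \leq |S|\,|\mathrm{Out}(S)|$. Note also that, by the Brauer--Fowler argument recalled before Lemma \ref{more than 1}, if $G$ has at least two conjugacy classes of involutions then $|G| < |C_G(t)|^{3}$ already holds whenever $t$ lies in a class of centralizer of maximal order; so the real issue is to control the involutions with \emph{small} centralizer. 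The plan is to run through the three families of the classification and dispose of each in turn.

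For $S$ alternating, say $S = A_n$ with $n \geq 5$, I would use that an involution of $S_n$ displacing $2k$ points has centralizer of order $2^{k}k!\,(n-2k)!$ in $S_n$, with its centralizer in $A_n$ (and in any intermediate $G$) of index at most $2$ in that, and likewise bound the centralizers of the outer involutions when $n=6$. The quantity $2^{k}k!\,(n-2k)!$ is smallest near $k \approx n/2$, and a Stirling estimate gives $\bigl(2^{k}k!\,(n-2k)!\bigr)^{3} > n!$ with increasing slack as $n$ grows; the finitely many small $n$, and the almost simple groups with socle $A_5$ or $A_6$, are checked by hand or from the ATLAS. The sporadic simple groups, together with their (cyclic, of order at most $2$) outer automorphism groups, form a finite list which is settled by inspection of the ATLAS.

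The groups of Lie type form the main case. Let $\mathbf{G}$ be the ambient simple algebraic group over $\overline{\mathbb{F}}_p$ and $F$ a Steinberg endomorphism with $S$ the corresponding simple group. Up to conjugacy, an involution $t \in \mathrm{Aut}(S)$ is inner-diagonal, a field automorphism, a graph automorphism, or a graph-field automorphism. For inner-diagonal $t$ one invokes the (purely algebraic-group-theoretic, though classification-dependent) description of classes of involutions and their centralizers: $C_{\mathbf{G}}(t)^{\circ}$ is reductive of maximal rank, and in every case $3\dim C_{\mathbf{G}}(t) > \dim \mathbf{G}$ --- this needs a brief check, the tightest instances being $D_8 \leq E_8$, $A_1E_7 \leq E_8$ and the ``balanced'' semisimple and unipotent involutions of the classical groups, but all clear the bound comfortably; one must also keep track of the component group of $C_{\mathbf{G}}(t)$ and of the diagonal automorphisms contributing to $C_G(t)$. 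For the remaining kinds, $C_S(t)$ is, up to bounded index, either a group of Lie type of the same rank over a proper subfield $\mathbb{F}_{q_0}$ (with $q$ a proper power of $q_0$, so $|C_S(t)| \asymp q_0^{\dim\mathbf{G}}$ against $|S| \asymp q^{\dim\mathbf{G}}$) or a twisted subsystem subgroup --- $\mathrm{PSp}$, $\mathrm{P}\Omega$, $F_4 \leq E_6$, $G_2 \leq D_4$, and so on (graph case) --- and one verifies the analogous inequality. One then converts the dimension bounds into order bounds using $|\mathbf{G}^{F}| = q^{|\Phi^{+}|}\prod_i (q^{d_i} - \varepsilon_i)$ and the fact that $|\mathrm{Out}(S)|$ is bounded by a fixed polynomial in the rank and in $\log q$; since $3\dim C_{\mathbf{G}}(t) - \dim \mathbf{G}$ is a positive constant, this succeeds for all but small $q$, and the exceptions ($q \leq 4$, together with the groups of smallest rank --- in particular $\mathrm{PSL}_2(q)$, $\mathrm{PSL}_3(q)$, $\mathrm{PSU}_3(q)$, $\mathrm{PSp}_4(q)$, $G_2(q)$, ${}^{2}B_2(q)$, ${}^{2}G_2(q)$, ${}^{2}F_4(q)$) are handled by explicit computation with the order formulas and known centralizer structures.

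I expect the Lie-type case, and specifically the bookkeeping of outer involutions, to be the main obstacle. It is genuinely necessary to estimate $|C_G(t)|$ for each $G$ between $S$ and $\mathrm{Aut}(S)$, not merely $|C_S(t)|$: for instance a unipotent involution $t$ of $S = \mathrm{PSL}_2(2^{f})$ has $|C_S(t)|^{3} = q^{3}$, which exceeds $|S| = q^{3}-q$ but is \emph{smaller} than $|\mathrm{Aut}(S)| = (q^{3}-q)f$ once $f \geq 2$; in such a case one must observe that the field automorphisms centralize $t$, so that the outer part of $C_G(t)$ restores the needed margin. Carrying the uniform asymptotic estimate alongside a disciplined finite check of the small-$q$, small-rank and disconnected-centralizer exceptions, and ensuring that the outer-automorphism contributions to $|G|$ and to $|C_G(t)|$ are matched up consistently, is where the labour of the proof will lie.
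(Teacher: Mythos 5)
Your proposal is correct and follows essentially the same route as the paper: invoke the classification, dispose of alternating and sporadic socles by direct centralizer computation and ATLAS inspection, and for Lie type split involutions into inner-diagonal versus field/graph/graph-field classes, using the known centralizer descriptions (which give $|C_S(t)|$ of order roughly $|S|^{1/2}$, i.e. your $3\dim C_{\mathbf{G}}(t)>\dim\mathbf{G}$) together with a finite check of small rank and small $q$. Your explicit attention to matching the outer-automorphism contributions to $|G|$ and to $|C_G(t)|$ (e.g.\ for $\mathrm{PSL}_2(2^f)$) is a point the paper's sketch passes over silently, but it does not change the method.
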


We do not require the full strength of this result in order to prove Theorem \ref{result 2}.  All we need is that there is some such involution and as noted in
Lemma \ref{more than 1}, it suffices to assume that $G$ has a unique class of elements of order $2$.   The almost simple groups with
a single class of involutions are quite restricted and it is quite easy to prove the result in that case.   However, we record the stronger result since it would
be required to prove a stronger result for the general finite group of even order.

Our proof uses the classification of finite simple groups.   As noted before, it follows by \cite[Theorem 14]{GR}, it would suffice to know
that $|C_G(t)| \ge k(G)$.  This is almost always true (but it does barely fail for $G = {\rm SL}(2,2^a)$ when $k(G)  = 2^a+1 > 2^a= |C_G(t)|$ for $t$
an involution).  

If $F^*(G)$ is a sporadic group or $A_n$ with $n \le 12$,  then this follows from \cite{Atlas} or \cite{GL}.   Suppose that $G=S_n$ 
with $n > 12$.   If $t$ is an involution moving precisely $2d$ points, then $|C_G(t)| = (2^d)(d!)(n-2d)!$ and the result
is straightforward (and similarly for $A_n$).

So we may assume that $S:=F^*(G)$ is a simple group of Lie type of rank $r$ over the field of $q$ elements. 
    Then the involutions in $G$ are either inner-diagonal, field automorphisms, graph automorphisms
or field-graph automorphisms. Their centralizers are described in \cite{GLS}.   In the last three cases, the conjugacy classes of
such are given and in all cases, the centralizer has order roughly $|G|^{1/2}$ and the result follows.  Indeed, we also note
that $k(G)$ in these cases is only a bit larger than $q^r$ \cite{FG}.  
  
If $S$ is an exceptional group, then the inner diagonal involutions are given in \cite{Lu} and the result follows by observation
(for the case of even characteristic, see also \cite{LS}).    

It remains to consider the case that $S$ is a classical group (i.e. linear, unitary, symplectic or orthogonal) and the involution is
inner diagonal.   If $S$ has odd characteristic, there are two possibilities : the first is that an involution will induce an orthogonal decomposition
of the space and the centralizer will be a product of two large classical subgroups and the result follows easily.   The other possibility
is that the element lifts to an element of larger order in the simply connected group and the centralizer will have order approximately 
$|G|^{1/2}$.  
If $S$ has characteristic $2$, then the centralizers are described in \cite{LS} and again it is straightforward to check the result.

We give a proof with more detail of the case we really need.

\begin{thm}  Let $S$ be a finite non-Abelian simple group and $G$ be a finite group with $F^*(G)=S$ such that  $G$ has a unique conjugacy class of involutions. Then $[G:S]$ is odd,  $S$ has a unique conjugacy class of involutions, and $|C_G(t)|^3 > |G|$ for any involution $t \in G$.
\end{thm}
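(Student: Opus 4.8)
The plan is as follows. First one reduces to the almost simple situation: since $F^*(G)=S$ is non-Abelian simple, $C_G(S)\le C_G(F^*(G))\le F^*(G)=S$, so $C_G(S)\le Z(S)=1$; hence $G$ embeds in $\mathrm{Aut}(S)$ with $S=\mathrm{Inn}(S)\lhd G$, so $G$ is almost simple and $G/S\le \mathrm{Out}(S)$. The same computation gives $Z(G)=1$, and as $O_{2'}(G)$ is a normal subgroup meeting $S$ trivially it centralizes $S$, whence $O_{2'}(G)=1$. By the Feit--Thompson theorem $|S|$ is even, so $S$ contains an involution; since $S\lhd G$ and all involutions of $G$ are conjugate, \emph{every involution of $G$ lies in $S$}. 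This is the basic observation that drives everything.

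Next I would prove that $[G:S]$ is odd. Suppose not, and pick $K$ with $S\le K\le G$ and $[K:S]=2$. An element of $K\setminus S$ induces on $S$ an automorphism $\alpha$ whose image $\bar\alpha\in\mathrm{Out}(S)$ has order $2$: it is non-trivial because $C_K(S)=1$, and $\bar\alpha^2=1$ since $\alpha^2$ is inner. Using the classification of finite simple groups and the explicit description of $\mathrm{Aut}(S)$, every involution of $\mathrm{Out}(S)$ is the image of an automorphism of $S$ of order $2$; this is checked on the standard generators of $\mathrm{Out}(S)$ (diagonal, field, graph and graph--field automorphisms) and their commuting products. So $\bar\alpha$ is the image of some involutory $\beta\in\mathrm{Aut}(S)$. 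Since $\beta$ differs from $\alpha$ by an inner automorphism, $\beta$ is conjugation by some $g\in K$ with $gS$ the non-trivial coset, and $\beta^2=1$ together with $C_K(S)=1$ forces $g^2=1$; thus $g$ is an involution in $K\setminus S$, contradicting the first paragraph. Hence $[G:S]$ is odd. I expect this involution-lifting fact for $\mathrm{Out}(S)$ to be the first real obstacle, though it is routine given the structure theory of the simple groups.

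Now I would show $S$ has a unique class of involutions. The $S$-classes of involutions are blocks for the transitive $G$-action on the set of involutions, so $G$ permutes them transitively; this action is trivial on $S$, hence factors through $G/S$, which has odd order by the previous step, so the number of $S$-classes of involutions is odd. Running through the classification of finite simple groups together with the known involution classes and outer automorphism groups (as in the sketch of Theorem~\ref{as}), one finds that the only possibility compatible with $G$ having a single class of involutions is that $S$ itself has a single class; in the process one records the short explicit list of pairs $(S,G)$ that occur. Pinning down this list is the second, and main, classification-theoretic input; there is no shortcut avoiding CFSG here.

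Finally, the inequality $|C_G(t)|^3>|G|$ is immediate from Theorem~\ref{as}, since the hypothesis here is a special case; alternatively, now that the pairs $(S,G)$ are explicit, it can be verified directly: by the inequality $|G|<k(G)|C_G(t)|^2$ obtained in the proof of \cite[Theorem~14]{GR} (valid for every group of even order), it suffices to check $k(G)\le |C_G(t)|$, which holds by inspection for $S$ alternating, sporadic or $J_1$ (via \cite{Atlas}, \cite{GL}) and for $S$ of Lie type using the bounds on $k(G)$ of \cite{FG} with the centralizer orders of \cite{GLS} and \cite{LS}, the sole exceptional family being $G=\mathrm{SL}(2,2^a)$, where instead $|C_G(t)|^3=2^{3a}>2^{3a}-2^a=|G|$ directly. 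This completes the plan.
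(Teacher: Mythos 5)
Your first paragraph is fine and agrees with the paper's setup: all involutions of $G$ lie in $S$. The serious problem is your second paragraph. The lemma you rely on --- that every involution of ${\rm Out}(S)$ lifts to an automorphism of $S$ of order $2$ --- is false: for $S=A_6$ the outer involution corresponding to $M_{10}=A_6\cdot 2_3$ has no lift of order $2$, since every element of $M_{10}\setminus A_6$ has order $4$ or $8$ (a Sylow $2$-subgroup of $M_{10}$ is semidihedral of order $16$, and all five of its involutions lie in the dihedral Sylow $2$-subgroup of $A_6$). Worse, this same example defeats the assertion you are trying to prove: $G=M_{10}$ has $F^*(G)=A_6$ and a unique conjugacy class of involutions (the $45$ involutions of $A_6$), yet $[G:S]=2$. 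So the conclusion ``$[G:S]$ is odd'' is false as stated --- the paper's own ``Clearly $[G:S]$ is odd'' is an error as well --- and no argument will establish it. Fortunately nothing else depends on it: the remaining conclusions hold for $M_{10}$ (there $|C_G(t)|=16$ and $16^3>720$), and the paper's derivation of them does not use the parity of $[G:S]$.

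This failure propagates into your third paragraph, whose block argument for ``$S$ has a unique class of involutions'' is built on the odd order of $G/S$ and then falls back on an unspecified trawl through the classification. The tool you are missing is the one the paper actually uses, namely \cite[12.1]{FGS}: every finite non-Abelian simple group $S$ contains an involution whose $S$-class is ${\rm Aut}(S)$-invariant. That $S$-class is then a nonempty $G$-invariant subset of the single $G$-class of involutions, hence equals the full set of involutions of $G$; consequently $S$ has a unique class of involutions, the $G$-class of $t$ coincides with its $S$-class, and $G=SC_G(t)$, which reduces the inequality to the case $G=S$ with one class of involutions --- a short explicit list ($A_n$ only for $n\le 7$, Lie type of rank at most $2$, a few sporadic groups) where $|C_S(t)|^3>|S|$ is checked directly. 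Your final paragraph (quoting Theorem \ref{as}, or verifying $k(G)\le |C_G(t)|$ with ${\rm SL}(2,2^a)$ handled separately) is acceptable for the inequality itself, but it does not repair the two earlier steps.
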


\begin{proof}  By \cite[12.1]{FGS},  any finite non-Abelian simple group $S$ contains an involution whose $S$-conjugacy class is invariant under $\mathrm{Aut}(S)$ and so, in our case, under $G$.
Clearly $[G:S]$ is odd and since $G$ has a unique class of involutions and the $S$-class is $G$-invariant, $S$ has a unique class of involutions.
In particular, we see that $G=SC_G(t)$ and so it suffices to assume that $G=S$ to prove the inequality.  

The list of simple groups containing a unique conjugacy class of involutions is quite limited.  One can check the sporadic groups as above.
If $S= A_n, n \ge 5$, then the only possibilities are $n \le 7$.  Groups of Lie type can have rank at most $2$ and for those we just need
to check the inequality for any involution (since either this is the unique class or there is more than one).
\end{proof}

We note that this gives:

\begin{thm} \label{big involution centralizer}   Let $G$ be a finite group with $F(G)=1$.  Then there exists an involution $t \in G$ with
$|C_G(t)|^3 > |G|$.
\end{thm}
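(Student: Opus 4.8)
The plan is to reduce to the almost simple case (Theorem~\ref{as}) by localizing at a single component of $F^{*}(G)$; no appeal to Lemma~\ref{more than 1} or to a reduction to one class of involutions is needed. We may assume $G\neq 1$. Since $F(G)=1$ we have $F^{*}(G)=E(G)$, and $Z(E(G))$ --- being a characteristic abelian subgroup of $G$ --- lies in $F(G)=1$; hence every component of $G$ is non-abelian simple and $E(G)=S_{1}\times\cdots\times S_{k}$ is a \emph{direct} product of non-abelian simple groups, with $k\geq 1$. Put $S=S_{1}$, fix an involution $\tau\in S_{1}\leq G$ (one exists since $|S_{1}|$ is even), and set $M=N_{G}(S_{1})$ and $D=C_{G}(S_{1})$.

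I would first record the elementary facts forced by the direct-product structure. If $g\in C_{G}(\tau)$ then $\tau=\tau^{g}\in S_{1}^{g}$, and $S_{1}^{g}$ is a component containing the non-identity element $\tau\in S_{1}$, so $S_{1}^{g}=S_{1}$; thus $C_{G}(\tau)\subseteq M$, and also $D\subseteq M$. Moreover $S_{1}\cap D=Z(S_{1})=1$, so $\tau\notin D$. Hence $\overline{M}:=M/D$ embeds in $\mathrm{Aut}(S_{1})$ with $\mathrm{Inn}(S_{1})\leq\overline{M}$, so $\overline{M}$ is almost simple with $F^{*}(\overline{M})\cong S$, and $\overline{\tau}:=\tau D$ is an involution of $\overline{M}$. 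The one identity that needs checking is $C_{M}(\tau)/D=C_{\overline{M}}(\overline{\tau})$: the inclusion $\subseteq$ is immediate, while for a lift $g\in M$ of an element of $C_{\overline{M}}(\overline{\tau})$ one has $\tau^{-1}\tau^{g}\in D\cap S_{1}=1$, so $g\in C_{M}(\tau)$. Consequently $|C_{G}(\tau)|=|C_{M}(\tau)|=|D|\cdot|C_{\overline{M}}(\overline{\tau})|$.

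Finally I would apply Theorem~\ref{as} to the almost simple group $\overline{M}$, obtaining $|C_{\overline{M}}(\overline{\tau})|^{3}>|\overline{M}|=|M|/|D|$, whence
$$|C_{G}(\tau)|^{3}=|D|^{3}\,|C_{\overline{M}}(\overline{\tau})|^{3}>|D|^{2}\,|M|=\frac{|D|^{2}\,|G|}{[G:M]}.$$
So it suffices to show $|D|^{2}\geq[G:M]$. But $[G:M]$ is the length of the $G$-orbit of $S_{1}$, hence at most $k$, while $D$ contains $S_{2}\times\cdots\times S_{k}$, so $|D|\geq 2^{\,k-1}\geq k\geq[G:M]$; this finishes the proof (and the case $k=1$, where $D=1$ and $\overline{M}=G$, is just Theorem~\ref{as} directly). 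I do not expect a genuine obstacle: the only mildly delicate point is reconciling the over-counting factor $[G:M]$ coming from many components with the cube on the left, and this is absorbed for free because every component other than $S_{1}$ lies in $C_{G}(S_{1})$. The real content sits entirely in Theorem~\ref{as}.
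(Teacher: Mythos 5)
Your argument is correct, and it takes a genuinely different route from the paper's. The paper splits on the number $r$ of components of $E(G)$: if $r>1$ it exhibits two non-conjugate involutions (one inside a single factor, one projecting nontrivially onto at least two factors) and invokes the classification-free Brauer--Fowler count of Lemma \ref{more than 1}, while if $r=1$ the group is almost simple and the preceding theorem applies; notably the paper stresses that only the ``unique class of involutions'' case of Theorem \ref{as} is needed on this route. You instead localize at one component: with $M=N_G(S_1)$, $D=C_G(S_1)$, you correctly verify $C_G(\tau)\le M$, $C_G(\tau)/D=C_{\overline M}(\overline\tau)$ for the almost simple group $\overline M=M/D$, apply the full Theorem \ref{as} there, and absorb the index $[G:M]\le k$ by the crude bound $|D|\ge |S_2\cdots S_k|\ge 2^{k-1}\ge k$; all of these steps check out (the key facts being that distinct components here intersect trivially and that $S_1\cap D=1$). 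What your approach buys is a uniform treatment of all $k\ge 1$ and a stronger conclusion -- the inequality holds for \emph{every} involution lying in a component, with substantial slack when $k\ge 2$ -- whereas what the paper's approach buys is economy of input: its multi-component case needs no classification at all, and its almost simple case needs only the easier, more carefully proved unique-class version of Theorem \ref{as}, while your argument leans on the full strength of Theorem \ref{as}, whose proof in the paper is only sketched and is the most classification-heavy ingredient. (One cosmetic point: the hypothesis $F(G)=1$ with $G\neq 1$ is implicitly assumed, as in the paper, since the trivial group has no involutions.)
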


\begin{proof}   Let $E=E(G)$ be the direct product of the minimal normal subgroups of $G$.  Since
$F(G)=1$, $E$ is a direct product of $r$ non-Abelian simple groups.  If $r > 1$, then $G$ has more than 
one conjugacy class of involutions (choose one involution contained in a simple factor and another
which projects nontrivially in at least two factors) and the result follows by Lemma \ref{more than 1}.   
So we may assume that $G$ is almost simple and the previous result applies.
\end{proof}  

We need the following variation of the previous result (which has an easier proof but still depends on the classification).

\begin{lemma} \label{quasisimple}  Let $G$ be a finite group with $E:=F^*(G)$ quasisimple.  Then there exists  
$x \in G \setminus{Z(G)}$ such that $|C_G(x)|^3 > |G|.$  
\end{lemma}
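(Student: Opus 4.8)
The plan is to reduce the statement to the classification-dependent fact already available for almost simple groups (Theorem \ref{as} or, more precisely, the statements just proved about groups with $F^*(G)$ simple), by exploiting the structure imposed by $E := F^*(G)$ being quasisimple. Write $Z = Z(E)$, so that $Z \leq Z(G)$ and $S := E/Z$ is a non-Abelian simple group. Then $G/Z$ is an almost simple group with socle $S$, and $|G| = |Z|\,|G/Z|$, while $|C_G(x)| \geq |Z|\,|C_{G/Z}(xZ)|$ whenever $xZ$ is non-central in $G/Z$ and $C_G(x)$ maps onto $C_{G/Z}(xZ)$ — care is needed here because centralizers do not always behave well modulo the center, so the cleanest route is to pick the element upstairs.

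First I would handle the case that $|Z|$ is even: then $E$ has a central involution $z$, which is non-central in $G$ only if it fails to lie in $Z(G)$ — but $z \in Z \leq Z(G)$, so instead I use a non-central involution coming from the simple quotient. Concretely, by \cite[12.1]{FGS} the simple group $S$ has an involution class invariant under $\mathrm{Aut}(S)$; lift a representative to an element $x \in E$ of $2$-power order projecting to this involution. A more uniform approach avoiding case division on $|Z|$: apply Theorem \ref{as} to the almost simple group $\bar G := G/Z$ to obtain an involution $\bar t \in \bar G$ with $|C_{\bar G}(\bar t)|^3 > |\bar G|$. Choose $x \in G$ a preimage of $\bar t$ of $2$-power order (possible since $\langle \bar t\rangle$ lifts to a $2$-subgroup of $\bar G$ which in turn lifts, as $|Z|$ and the relevant $2$-part interact controllably — or simply take any preimage and replace it by a suitable $2$-power). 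Then $x \notin Z(G)$ because $\bar t \notin Z(\bar G)$, and $C_G(x) \geq$ the full preimage of $C_{\bar G}(\bar t)$ is false in general, but $C_G(x)$ does surject onto a subgroup of $C_{\bar G}(\bar t)$ and contains $Z$; the key point is that $|C_G(x)| \geq |Z| \cdot |C_{\bar G}(\bar t)| / c$ for a small correction, and one wants to show the cube still dominates $|G| = |Z| \cdot |\bar G|$.

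The main obstacle is precisely this passage between $C_G(x)$ and $C_{\bar G}(\bar t)$: an element $x$ with $x^2 \in Z$ need not be an involution, and its centralizer in $G$ can be smaller than the preimage of $C_{\bar G}(\bar x)$. I would resolve this by choosing $x$ more carefully. If $\langle \bar t \rangle$ has a complement behaviour allowing a genuine involution lift $t \in G$, then $C_G(t)$ is the full preimage of $C_{\bar G}(\bar t)$ intersected appropriately and $|C_G(t)| \geq |Z|\,|C_{\bar G}(\bar t)|$, giving $|C_G(t)|^3 \geq |Z|^3 |C_{\bar G}(\bar t)|^3 > |Z|^3|\bar G| \geq |Z|\,|\bar G| = |G|$ (using $|Z| \geq 1$). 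When no such involution lift exists — which forces $|Z|$ even and the relevant Sylow $2$-subgroup of $\bar G$ to be, e.g., not split over $Z$ — one uses instead that $x$ of order $4$ with $x^2 \in Z$ still has $C_G(x)$ of index in $G$ at most $[\bar G : C_{\bar G}(\bar x)] \cdot [\,\cdot\,]$; since the simple groups $S$ with $|Z(E)|$ even and $F^*$ quasisimple but not split are a very short explicit list (from the Schur multiplier data), I would finish by direct inspection of those cases, checking $|C_G(x)|^3 > |G|$ for a convenient non-central $x$ (an involution in $E$ mapping to a non-central involution of $S$ when one exists, else the order-$4$ element above), exactly as in the proof of Theorem \ref{as}.

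Summarizing the order of steps: (1) set $Z = Z(E)$, $S = E/Z$, note $G/Z$ almost simple with socle $S$; (2) invoke Theorem \ref{as} for $G/Z$ to get an involution $\bar t$ with $|C_{G/Z}(\bar t)|^3 > |G/Z|$; (3) if $\bar t$ lifts to an involution $t \in G$, take $x = t$ and conclude immediately via $|C_G(t)| \geq |Z|\,|C_{G/Z}(\bar t)|$; (4) otherwise reduce to the short explicit list of quasisimple groups with non-split even center and verify the inequality by hand (as in \cite{Atlas}, \cite{GL}, \cite{GLS}) for a well-chosen non-central element of $2$-power order. Step (4) is where the real work sits, but it is a finite check on a very small list.
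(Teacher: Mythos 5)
There is a genuine gap, and it sits exactly where you flagged the ``care needed'': the inequality $|C_G(t)| \geq |Z|\,|C_{\bar G}(\bar t)|$ in step (3) is not justified and is false in general, even when $\bar t$ lifts to a genuine involution $t$. What is true is the reverse-flavoured statement: $C_G(t)$ contains $Z$ and maps \emph{into} $C_{\bar G}(\bar t)$, so $|C_G(t)| = |Z|\,|C_{\bar G}(\bar t)|/c$ where $c = [H:C_G(t)]$ for $H$ the full preimage of $C_{\bar G}(\bar t)$; the map $g \mapsto t^{-1}t^g$ is a homomorphism from $H$ into $\{z \in Z : z^2 = 1\}$ with kernel $C_G(t)$, so $c$ can be $2$ (or $4$). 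With $|Z| = 2$ and $c = 2$ the computation gives only $|C_G(t)|^3 > |Z|^2|G|/c^3 = |G|/2$, which does not prove the lemma. So even the ``good'' branch of your argument does not close without controlling $c$, which is itself a case analysis. Separately, step (4) is not ``a finite check on a very small list'': the quasisimple groups in which the relevant involution of $S$ fails to lift to an involution include all of $\mathrm{SL}(2,q)$ for odd $q$ (where the \emph{only} involution is central) and, more broadly, infinitely many classical groups in odd characteristic with nontrivial centre. That branch is an infinite family, not a residue.

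The paper sidesteps the lifting problem entirely by not using involutions. It chooses a non-central element $x$ of the quasisimple group $E$ itself --- a $3$-cycle when $S = A_n$, a long root element (or an element central in a Sylow $p$-subgroup) when $S$ is of Lie type in characteristic $p$ --- so that (in almost all cases) the order of $x$ is coprime to $|Z(E)|$ and no central extension issues arise, and so that the $E$-class of $x$ is invariant under field and graph automorphisms. The invariance gives $[G:C_G(x)] = [E:C_E(x)]$, i.e.\ $|C_G(x)| = [G:E]\,|C_E(x)|$, reducing everything to the single verifiable inequality $|C_E(x)|^3 > |E|$ (e.g.\ $|P|^3 > |E|$ for $P$ a Sylow $p$-subgroup). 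If you want to repair your argument, the cleanest fix is to adopt this device: replace the involution $\bar t$ by an element whose order is prime to $|Z(E)|$ and whose class is $\mathrm{Aut}(S)$-stable, rather than trying to push involutions through the central extension.
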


\begin{proof}
  We just produce a non-central element of the
quasisimple group  $E$ with a very large centralizer.   Let $S=E/Z(E)$.   If $S$ is alternating choose $x$ to be
a $3$-cycle.   If $n > 7$, then $x$ has order prime to the Schur multiplier and the centralizer has order bigger than
$|G|^{1/2}$.  If $n \le 7$, one just checks directly.  Similarly, one checks the result holds for $S$ a sporadic simple group \cite{Atlas}.

Finally, if $S$ is a simple group of Lie type in characteristic $p$, we usually take $x$ to be a long root element.  In a few cases of twisted groups, we can take
any non-central element of $E$ which is central in a Sylow $p$-subgroup $P$ of $E$.  
We choose the  conjugacy class of $x$ to be  invariant under field automorphisms and graph automorphisms (this is possible aside from the case
of $B_2$ or $F_4$  in characteristic $2$ and $G_2$ in characteristic $3$).   In almost all cases, $p$ does not divide  $|Z(E)|$.   One checks directly that in 
 all cases we have  $|P|^3 > |E|$ and the invariance of the class under field automorphisms implies that $|C_G(x)|^3 > |G|$.
 \end{proof}

\section{Big Centralizers} \label{Big Centralizers}

We will prove Theorem \ref{big centralizer}.  We start with some easy observations.  So let $G$ be a finite non-Abelian group with Fitting subgroup $F=F(G)$.
We first show that in many cases we have $|G| < |C_G(x)|^2$ for some  non-central $x \in G$.  If $A \lhd G$ is Abelian, but non-central, then we have 
$[G:C_{G}(x)] < |A|$ for any $x \in A^{\#}$,  so we certainly have $|G| < |C_{G}(x)|^{2}$. If $Z_{2}(G) > Z(G),$ we may take $A = \langle x,Z(G) \rangle$ for $x \in Z_{2}(G) \backslash Z(G)$.
If $F = F(G)$ is Abelian, but not central, we may take $A = F$ and obtain the same conclusion.   More generally as long as $F \ne Z(G)$, we can choose
$x \in F$ not central in $G$ with $[x,F] \le Z(G)$.  This implies that $[G:C_G(x)] < |F|$.   Since $[F:C_F(x)] <  |Z(G)|$, we obtain 
$$
 |G| < |F||C_G(x)| < |C_G(x)||C_F(x)||Z(G)| \le |C_G(x)|^2|Z(G)|.
 $$
 
 We record this:
 
 \begin{lemma}  Let $G$ be a finite group with $F(G) \ne Z(G)$.  Then there exists $x \in G \setminus{Z(G)}$ with $|G| < |C_G(x)|^2|Z(G)|.$
 \end{lemma}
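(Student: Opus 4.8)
The plan is to use the fact that the centre $Z(G)$, being an abelian normal subgroup, is always contained in $F:=F(G)$; so the hypothesis $F(G)\ne Z(G)$ says precisely that $F$ properly contains $Z(G)$, and hence that $F/Z(G)$ is a nontrivial nilpotent group. A nontrivial nilpotent group has nontrivial centre, so I would begin by choosing $x\in F$ whose image $xZ(G)$ is a nonidentity element of $Z(F/Z(G))$. By construction $x\notin Z(G)$ (so $x$ is the required non-central element) and $[x,f]\in Z(G)$ for every $f\in F$; since moreover $[x,F]\le F'\le F$, this gives $[x,F]\le Z(G)$.

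The next step is to estimate $[G:C_G(x)]$. Since $x$ lies in the normal subgroup $F$, the conjugacy class $x^{G}$ is contained in $F$, and as $x\ne 1$ the identity is not among the conjugates of $x$; hence $[G:C_G(x)]=|x^{G}|\le |F|-1<|F|$, so $|G|<|F|\,|C_G(x)|$. This is the one place where I get a strict inequality, and it is all the strictness the statement needs.

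Then I would estimate the class of $x$ inside $F$. Writing $x^{f}=x[x,f]$ and using $[x,F]\le Z(G)$, one sees $x^{F}\subseteq xZ(G)$, so $[F:C_F(x)]=|x^{F}|\le |Z(G)|$, i.e. $|F|\le |C_F(x)|\,|Z(G)|$. Since $C_F(x)\le C_G(x)$, combining this with the previous paragraph yields
$$|G|<|F|\,|C_G(x)|\le |C_F(x)|\,|Z(G)|\,|C_G(x)|\le |C_G(x)|^{2}\,|Z(G)|,$$
as desired.

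I do not anticipate a real obstacle: the statement is elementary and the argument is short. The only thing to be careful about is bookkeeping of the inequalities — the two estimates for the sizes of the conjugacy classes of $x$ in $F$ and in $G$ are genuinely non-strict in general (for instance when $F$ is extraspecial and $Z(G)=Z(F)$, both become equalities), so the strict inequality in the conclusion must be read off from the trivial remark that $1\notin x^{G}$.
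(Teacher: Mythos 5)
Your argument is correct and is essentially the paper's own: both choose $x\in F(G)\setminus Z(G)$ with $[x,F(G)]\le Z(G)$ (the paper leaves the existence of such an $x$ implicit, and your appeal to the nontriviality of $Z\bigl(F(G)/Z(G)\bigr)$ is the standard justification) and then combine $[G:C_G(x)]<|F(G)|$ with $[F(G):C_{F(G)}(x)]\le |Z(G)|$. If anything you are slightly more careful than the paper, which records the second estimate as a strict inequality even though, as your extraspecial example shows, it can be an equality, so that the strictness of the conclusion indeed rests on $1\notin x^{G}$.
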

 
Hence we now consider a finite group $G$ with $F = F(G) = Z(G).$

Since $F=Z(G)$ and $G$ is non-Abelian, it follows that 
  $G$ has a component $L$,  which may be chosen so that $|L|$ is minimal among all components. Set $E = E(G)$.  Suppose that $E$ has $r > 1$
  components. Then $[E:C_E(x)] < |L|$ for $x \in L \backslash Z(L)$ and $[G:C_G(x)] < r|L|$, 
  so that certainly 
  $$
  |G| < r |L||C_G(x)| < |C_G(x)|^2.
  $$ 

We are reduced to considering the case that $F^{\ast}(G) = Z(G)L$ for a single component $L$. We first prove the slightly weaker result without the full classification
(but using the Feit-Thompson Theorem, and a Theorem of Griess \cite{Griess} in the extreme case that all involutions of $G$ are central). If all involutions of $G$ are central, then Griess's Theorem allows  
us to conclude that $L$ is either some ${\rm SL}(2,q)$ for 
$q >3$ odd, or else $L/Z(L) \cong A_{7}$ and $|Z(L)| \in \{2,6\}$.  In these cases, it can be seen by inspection that there is some $x \in G \backslash Z(G)$ with $|C_{G}(x)|^{3} > |G|.$ 
Hence we suppose that $G$ contains a non-central involution $t$.  

In that case, (using \cite{GR} again), we have $$|G| < k(G)|C_{G}(t)|^{2}.$$ Now choose $x \in G \backslash Z(G)$ with $|C_{G}(x)|$ as large as possible.  Then certainly $$|C_{G}(x)| \geq |C_{G}(t)|.$$ Furthermore, we have $$k(G) < |C_{G}(x)| + |Z(G)|$$ from the proof of Lemma 2, part vii) of \cite{GR}.
Since $G$ is certainly not solvable, we know that there is an element $y \in G$ such that $yZ(G)\in G/Z(G)$ has prime order $p \geq 5.$ Then $$|C_{G}(x)| \geq |C_{G}(y)| \geq 5|Z(G)|.$$
Hence we now have $$|G| < \frac{6|C_{G}(x)|^{3}}{5}.$$

In order to prove the better inequality stated at the end of Theorem 5, the proof above shows that it suffices to consider the almost quasisimple case.  
Now apply Lemma \ref{quasisimple}.   

 \section{Theorem \ref{result 2}: Some Reductions}  \label{theorem2-1}

We now consider the question of lowering the exponent $4$ with the relaxed condition that we wish to prove that 
whenever $G$ is a finite group of even order, there is \emph{some} involution $t \in G$ with 
$$[G:F(G)] < |C_{G}(t)|^{3}.$$ 
We have seen by Lemma \ref{more than 1}
 that this condition is satisfied if $G$ has more than one conjugacy class of involutions, 
 so we suppose that $G$ has only one conjugacy class of involutions. We also assume that $G$ has minimal (even) order subject to having 
 $$[G:F(G)] \geq |C_{G}(t)|^{3}$$ 
 for \emph{every} involution $t \in G.$ Then $G$ is certainly not nilpotent.
 
 We have proved the result if $F= F(G)=1$ in Section \ref{almost simple}, so suppose that $F >1.$  
If $O_{2}(G) \neq 1,$ then all involutions of $G$ lie in 
$$Z = \Omega_{1}(Z(O_{2}(G)))$$ since $G$ has a single conjugacy class of involutions.
Hence for each involution $t \in G$, we have 
$$[G:C_{G}(t)] \leq |Z|-1,$$  so we certainly have $$|G| < |C_{G}(t)|^{2},$$ contrary to the choice of $G.$ 
Hence $O_{2}(G) = 1.$ 
 
So $1 \ne F$ has odd order. Let $N = O_{2^{\prime}}(G).$ Then $G/N$ still has a unique conjugacy class of involutions. If $O_{2}(G/N) \neq 1,$ let $Z \lhd G$ be the full preimage in $G$ 
of $\Omega_{1}(Z(O_{2}(G/N))).$ Applying the above argument in $G/N,$ we have 
$$[G:N] < [C_{G}(t):C_{N}(t)]^{2}$$ 
for each involution $t \in G.$

If now $G$ contains a Klein $4$-group $A,$ then all involutions of $A$ are conjugate to $t$ above, and Lemma \ref{bw} gives $|N| \leq |C_{N}(t)|^{3}.$  
Then $|G| < |C_{G}(t)|^{2}|C_{N}(t)|,$ contrary to the choice of $G$. Thus if $Z>N,$ then $G$ has cyclic or generalized quaternion Sylow $2$-subgroups. But 
in that case, (by the Brauer-Suzuki Theorem), $G = NC_{G}(t)$ for any involution $t \in Z$.  Now let $L = N\langle t \rangle \lhd G$.
  If $L$ is proper, then we have 
  $$[G:C_{G}(t)] = [L:C_{L}(t)] < |F(L)||C_{L}(t)|^{2}$$ by the minimality of $G$,
  which certainly yields $$[G:F(G)] < |C_{G}(t)|^{3}$$ since $F(L) \leq F(G).$ 
  Hence $L = G$ in this case, and $G$ has a normal $2$-complement and a Sylow $2$-subgroup of order $2$.

Suppose then that $O_{2}(G/N) = 1.$ Then since we certainly have $O_{2^{\prime}}(G/N) = 1,$ we see that $F(G/N) = 1.$ Now $G/N$ still has a single conjugacy class of involutions, and since $N \neq 1,$ the minimal choice of $G$ gives (since $F(G/N) = 1$) that 
$$[G:N] < [C_{G}(t):C_{N}(t)]^{3}$$ for any involution $t \in G.$ Also, $G$ does not have cyclic or 
generalized quaternion Sylow $2$-subgroups (again by the Brauer-Suzuki theorem). 
Hence $G$ contains a Klein $4$-subgroup, say $A$, and we have $$|N| \leq |C_{N}(t)|^{3}$$ for the involution $t$ above. Hence we obtain 
$$|G| < |C_{G}(t)|^{3}$$ in this case, contrary to the choice of $G.$ 

In summary, we have shown that it suffices to prove the theorem in the case that  $G$ has a Sylow $2$-subgroup of order $2$.

\section{Theorem \ref{result 2}: The proof in the solvable case} \label{solvcase}

We turn our attention to proving the desired result in the remaining case, so we suppose that our minimal counterexample $G$ has a Sylow $2$-subgroup of order $2,$ and we 
set $$N = O_{2^{\prime}}(G).$$ 

In this case, we will prove a sharper version of the result: we will prove  Theorem \ref{odd case} which asserts
in particular that whenever $N$ is a non-trivial (solvable) group of odd order and $t$ is an automorphism of $N$ of order $2,$ then we 
have $$[N:F(N)] \leq  |C_{N}(t)|^{3}.$$ Set $M = [N,t] \lhd N.$ Then $N = MC_{N}(t),$ so that 
$$[N:F(M)] = [C_{N}(t):C_{M}(t)][M:F(M)].$$ If $M$ is nilpotent, then we have $$[N:F(N)] \leq |C_{N}(t)|$$
and we are done.

Hence we may suppose that $M = [N,t]$ is not nilpotent, and that $N = M,$ since $[N:M] = [C_{N}(t):C_{M}(t)].$

Now  $N = [N,t]$ has Fitting height $h \geq 2.$ We will now prove the second inequality of Theorem \ref{odd case} by induction on $|N|.$ We recall that we are adopting the convention that $F_{0}(X) = 1.$

Note that we may suppose that $Z(N) = 1, $ since replacing $N$ by 
${\bar N} = N/Z(N)$ does not change the index of the Fitting subgroup, and we have 
$$|C_{{\bar N}}({\bar t})| \leq |C_{N}(t)|$$ (likewise with $C_{F_{h-2}({\bar N})}({\bar t})$).

In particular, this implies that no minimal normal subgroup $V$ of $G$ is centralized, or inverted elementwise,
by $t$ (for otherwise, $V$ is centralized by $[N,t] = N$).

We note that $N/F_{h-2}(N)$ is not nilpotent, so is certainly non-Abelian, and $t$ must therefore have non-trivial fixed points in its action on $N/F_{h-2}(N).$ In particular, $$|C_{N}(t)| \geq 3|C_{F_{h-2}(N)}(t)|$$
since $N$ has odd order.

Suppose now that $h=2,$ so that $N/F(N)$ is nilpotent, but $N$ is not. We have $$\Phi(G) = \Phi(N),$$ and we may suppose that $\Phi(N) = 1.$ Then $F = F(N) = F(G)$ is a completely reducible module for $H = G/F.$ Set $L = N/F.$ 
By Theorem \ref{Fitting height two Hall}, applied with $\pi$ the set of odd prime divisors of $|G|$ and with the Hall subgroup $N,$ we know that $$k(G) \leq |F(N)| = |F(G)|.$$

By Theorem 14 of \cite{GR}, we know that $$|G|< k(G)|C_{G}(t)|^{2},$$ so that we obtain
$$|N| < 2|F(N)||C_{N}(t)|^{2}.$$ Here, $2^{h-3} = 2^{-1},$ so this is in accord with the statement 
of the Theorem. Furthermore, since $N$ is not nilpotent, we have $C_{N}(t) > 1,$ so we do have 
$$[N:F(N)] <  \frac{2|C_{N}(t)|^{3}}{3},$$ since $$|C_{N}(t)| \geq 3.$$ This completes the proof of the Theorem in the case that $[N,t]$ has Fitting height $2.$

Now we continue to assume that $N = [N,t],$ but now we suppose that $N$ has Fitting height $h >2.$ 

By induction, we may suppose that $$[L:F(L)] < \frac{|C_{F_{h-3}(L)}(t)||C_{L}(t)|^{2}}{2^{h-4}},$$
since $L = [L,t]$ has Fitting height $h-1\geq 2.$ 

Now we have $$[N:F(N)] = [N:F_{2}(N)][F_{2}(N):F(N)] \leq \frac{|F|-1}{2}[L:F(L)],$$ using Lemma \ref{spare reg}.
By applying Theorem \ref{linear fix} to each minimal normal subgroup $V$ of $G$ contained in $N$, recalling that $t$ does not act as a scalar on any such $V$, we also have $$|F| < |C_{F}(t)|^{3}.$$ Now we have  $$[N:F(N)] < \frac{|C_{F}(t)|^{3}}{2}\frac{|C_{F_{h-3}(L)}(t)||C_{L}(t)|^{2}}{2^{h-4}}.$$ 

Since  $$F_{h-2}(N)/F = F_{h-3}(L)$$ and $L = N/F,$ we may write this as
$$[N:F(N)] < \frac{|C_{F_{h-2}(N)}(t)||C_{N}(t)|^{2}}{2^{h-3}}, $$ as required.

Since $N/F_{h-2}(N)$ is not nilpotent and $|N|$ is odd, we have $$3|C_{F_{h-2}(N)}(t)| \leq |C_{N}(t)|,$$
so we have  $$[N:F(N)] < \frac{2}{3}\left( \frac{|C_{N}(t)|^{3}}{2^{h-2}}\right).$$

This completes the proof of Theorem \ref{odd case} and also the proof of 
Theorem \ref{result 2}.

\section{Commuting probabilities, character degrees, involution centralizers and large prime divisors} \label{bigprimes}

For a finite group $G$, we let $b(G)$ denote the maximal degree of a complex irreducible character of $G$.
D. Gluck has conjectured in \cite{Gluck} that when $G$ is solvable, we should have 
$$[G:F(G)] \leq b(G)^{2}.$$ 
This question remains open at present. A. Moret\'o and T. Wolf \cite{MW} proved that we do have 
$$[G:F(G)] \leq b(G)^{3}$$  when $G$ is solvable, and it has been conjectured (or rather, proposed as a question in \cite{GR}) that this inequality should hold for general finite $G$ (the examples ${\SL}(2,2^{n})$ indicate that the exponent $3$ can not be lowered in general).  We refer to this inequality from now on as the general form of Gluck's conjecture. It is proved in \cite{CH} (using \cite{GR})  that we do have $[G:F(G)] \leq b(G)^{4}$ for general finite groups $G$.

Theorem \ref{result 2} yields a positive answer to the general form of Gluck's conjecture in the special case stated in Theorem \ref{special gluck} :

\begin{proof} Let $G$ be a finite group whose irreducible characters all have Frobenius-Schur indicator $1$, and suppose   further that $G$ has one conjugacy class of involutions with representative $t.$ Then by Lemma \ref{invcount} we see that 
$$[G:C_{G}(t)] =  \sum_{ 1 \neq \chi \in {\rm Irr}(G)}\chi(1) \geq  \frac{|G|-1}{b(G)}.$$ 
Since $b(G)$ divides $|G|$, we deduce that $$|C_{G}(t)| \leq b(G).$$  By Theorem \ref{result 2} (since $G$ has a single conjugacy class of involutions),  we have $$[G:F(G)] < |C_{G}(t)|^{3} \leq b(G)^{3}.$$  
\end{proof}

This suggests the possibility that there might be some relationship between the general form of Gluck's conjecture and sizes of centralizers of involutions. 

As remarked in the introduction, we noted in \cite{GR} that $$|G| < k(G)|C_{G}(t)|^{2}$$ whenever $t$ is an involution of the finite group $G$. This may be written as $${\rm cp}(G) > \frac{1}{|C_{G}(t)|^{2}}$$ for each involution $t \in G,$ where ${\rm cp}(G)$ denotes the commuting probability of $G$. 

On the other hand, we proved (by relatively elementary means) in \cite{GR} that we have 
${\rm cp}(G) \leq \frac{1}{p}$ whenever $p$ is a prime such that $G$ does not have a normal 
Sylow $p$-subgroup. Hence whenever $G$ is a finite group containing an involution $t,$
we have $$p < |C_{G}(t)|^{2}$$ for each prime divisor $p$ of $[G:F(G)].$  

When $G$ is simple, this also follows from Brauer-Fowler \cite{BF}, since they proved that $|G|$ divides  $\left( |C_{G}(t)|^{2}\right)!$ in that case.

In fact, we can do somewhat better than this. Using theorems of Brauer \cite{Brauer}, Feit \cite{Feit} and Feit and Thompson \cite{FT} on complex linear groups, but avoiding use of the classification of finite simple groups, we now prove Theorem \ref{big prime}:

\begin{proof} Suppose that the Theorem is false and let $G$ be a minimal counterexample. Let $P$ be a Sylow $p$-subgroup of $G$ where $$p \geq |C_{G}(t)|+2$$ is a prime, and $t$ is an involution of $G$. Note that since $p$ is odd and $|C_{G}(t)|$ is even,  we in fact have $$p \geq |C_{G}(t)|+3.$$

We will derive a contradiction, thus proving that $P \lhd G$. Then we may also conclude that $t$ inverts $P$ elementwise, since $p$ does not divide $|C_{G}(t)|,$ so that $P$ is Abelian. It may be helpful to note first that if $N$ is a normal subgroup of $G$ which contains neither $t$ nor $P$, then $G/N$ inherits the hypotheses since 
$$|C_{G/N}(tN)| \leq |C_{G}(t)| \leq p-2$$ and $G/N$  has order divisible by $p.$ 

\medskip
Suppose that $t \in H \lhd G$ with $H$ proper. Then every $G$-conjugate of $t$ lies in $H$, so that 
$$[G:C_{G}(t)] < |H|$$ and $$[G:H] < |C_{G}(t)| < p.$$ Thus $P \leq H$, and then $P \lhd H$ by minimality of $G$. Then also $P \lhd G,$ a contradiction. Hence no proper normal subgroup of $G$ contains $t,$  so that $$G = \langle t^{g}: g \in G \rangle. $$ 

\medskip
We note that $Z(G) = 1$ since $t \not \in Z(G).$ For if $Z(G) \neq 1,$ then $G/Z(G)$ has a normal Sylow $p$-subgroup by minimality of $G$, and then $G$ does too, contrary to assumption. Similarly, we have $O_{p}(G) = 1,$ for otherwise $P/O_{p}(G) \lhd G/O_{p}(G)$ by minimality of $G$ and then $P \lhd G,$ a contradiction.

\medskip
We note also that $G$ has a unique minimal normal subgroup, $K$ say, since otherwise $G$ embeds in $$G/K_{1} \times G/K_{2}$$ for $K_{1},K_{2}$ distinct minimal normal subgroups of $G,$ and both factors have a normal (though possibly trivial) Sylow $p$-subgroup by the minimality of $G$, so $G$ has a normal Sylow $p$-subgroup, contrary to hypothesis. 

\medskip
If $K$ is a $q$-group for some prime $q \neq p,$ then $$PK/K \lhd G/K = {\bar G}$$ by minimality of $G.$ Then ${\bar t}$ inverts ${\bar P} \cong P$ elementwise. Now $G = \langle t \rangle PK$ by minimality of $G$ (otherwise $[P,K] = 1$ and $P \lhd G$). Also, $G$ is now solvable and $F(G) = K$ (for $F(G)$ is a $p^{\prime}$-group containing $K$ but not containing $t$). Note also that $t$ now normalizes every subgroup of $PK$ containing $K.$ Now it follows that $|P| = p$ by the minimality of $G$ (otherwise there is a subgroup $S$ of order $p$ of $P$ such that $\langle t \rangle SK$ is a subgroup, and then $[S,K] = 1,$ contradicting $K = F(G)).$ Since ${\bar t}$ inverts ${\bar P}$ elementwise, $t$ inverts some $p$-singular element of $G$. Hence $G = KD$ where $D$ is a dihedral subgroup of $G$ of order $2p$ containing $t,$ and we may suppose that $P = O_{p}(D).$ 

\medskip
Note that (by minimality of $K$), $P$ acts semi-regularly by conjugation on non-identity elements of $K,$ so that $|K| \equiv 1$ (mod $p$).

\medskip
On consideration of the structure of irreducible $FD$-modules, where $F = \mathbb{Z}/q\mathbb{Z}$, we see that $$|C_{K}(t)| = |K|^{\frac{1}{2}}$$ 
(even if $q = 2$). Hence $$p > |C_G(t)| \geq  2|C_{K}(t)| = 2|K|^{\frac{1}{2}}.$$ However, 
$p$ divides  $$(|K|^{\frac{1}{2}}-1)(|K|^{\frac{1}{2}}+1)$$ and $$|K|^{\frac{1}{2}}= |C_{K}(t)|$$ is an integer, so that $p \leq |K|^{\frac{1}{2}} + 1,$ a contradiction.

\medskip
Hence $K$ is now a direct product of non-Abelian simple groups permuted transitively under conjugation by $G$
and $F(G) = 1.$ Note that we now have $$K = E(G) = F^{\ast}(G)$$ and $C_{G}(K) = 1.$ We claim that $K$ is simple, so that $G$ is almost simple. Suppose otherwise. 

\medskip
If $p$ divides $|K|,$ then $t$ normalizes no component $L$ of $K$, otherwise $\langle t\rangle L$ is proper and hence $L$ has a normal Sylow $p$-subgroup by the minimality of $G$, contrary to the simplicity of $L$. Now there is a component $L$ of $G$ with $L^{t} \neq L.$ But then $t$ centralizes the diagonal subgroup of $LL^{t},$ and $p$ divides $|C_{G}(t)|,$ a contradiction. Hence $K$ is a $p^{\prime}$-group when $K$ is not simple.

\medskip
Now as before we obtain $G = KD$ with $D$ dihedral of order $2p$ and $t \in D.$ Again, we may suppose that $P = O_{p}(D).$ Since we are assuming that $K$ is not simple, $K$ is a direct product of $2,p$ or $2p$ components.
If there are just two components, say $L,L^{t},$ then both are normalized by $P.$ However $t$ centralizes the diagonal subgroup of $LL^{t}$ so that $|L| \leq |C_{G}(t)| < p$ and $[P,L] = 1$ (since all $P$-conjugacy classes of $L$ have size one), likewise for $L^{t}$. Then $[P,K]=1,$ a contradiction. If the number of components of $K$ is divisible by $p,$ then we see that $$p > 2|C_{K}(t)| \geq 2^{\frac{p+1}{2}},$$ a contradiction (we even have $$|C_{K}(t)| \geq |L|^{\frac{p-1}{2}}$$ for $L$ a component of $K$).

\medskip
Hence $K$ is simple, as claimed, and $G$ is almost simple. Furthermore, if $|K|$ is not divisible by $p$, we find as above that $G = KD$ with $D$ dihedral of order $2p,$ and we may suppose that $D = P\langle t \rangle.$ 
If $p$ divides $|K|$, then we either have $G = K$ or $G = \langle t \rangle K.$ 

\medskip
We use character theory to deal with these three possibilities.

\medskip
Suppose first that $G = K$. Then by Lemma \ref{invcount}, we have $$[G:C_{G}(t)] \leq \sum_{ \{1 \neq \chi \in {\rm Irr}(G): \nu(\chi) = 1\} } \chi(1).$$
Hence we certainly have $$[G:C_{G}(t)] < \frac{|G|}{d}$$ where $d$ is the minimal degree of a non-trivial irreducible character $\chi$ of $G$ such that $\nu(\chi) = 1.$ This yields 
$$\chi(1) = d < |C_{G}(t)| \leq p-3,$$ so that $d \leq p-4.$ Note that $\chi$ is faithful as $G$ is simple.

\medskip
By a Theorem of Feit, \cite{Feit} we know that $|P|=p,$ since $\chi$ is faithful.

\medskip
We may embed $G$ in ${\rm GL}(d,R)$ (still affording character $\chi$) where $(K,R,F)$ is a $p$-modular system.
Let $V$ be the underlying $RG$-module. Since $d < p,$ Green correspondence tells us that 
$${\rm Res}^{G}_{N_{G}(P)}(V)$$ is indecomposable (because $${\rm Res}^{G}_{N_{G}(P)}(V)$$ is projective-free).

\medskip
It follows that all indecomposable summands of ${\rm Res}^{G}_{P}(V)$ have equal rank, which is the rank of a source of $V$. If there is only one indecomposable summand, necessarily of rank $d$, then $O_{p^{\prime}}(C_{G}(P))$ acts as scalars on $V,$ so that $C_{G}(P) = P.$

\medskip
If there are two or more summands, then a generator of $P$ has minimum polynomial of degree less than $\frac{p-1}{2}$ in any reduction (mod $p$) of $G$. By \cite{GRR}, we have $O_{p^{\prime}}(C_{G}(P)) = 1$
and $C_{G}(P) = P.$

\medskip
In any case, then, we have $C_{G}(P) = P.$ Let $e$ be the inertial index $$[N_{G}(P):C_{G}(P)]$$ which must be less than $p-1$ since $$1 < \chi(1) < p-1.$$
Since $C_{G}(P) = P,$ we see that $\chi(1) \in \{e,p-e\}.$ If $e = \frac{p-1}{2},$ we have 
$$G \cong {\rm PSL}(2,p)$$ by a Theorem of Brauer \cite{Brauer} (using the facts that $Z(G) = 1$ and $\chi$ is faithful). Hence we suppose this is not the case (an involution centralizer  in ${\rm PSL}(2,p)$ has order $p \pm 1).$ Hence $e < \frac{p-1}{2},$ so that we cannot  have $\chi(1) = e$  by Feit-Thompson \cite{FT} again. Hence $\chi(1) = p-e.$ 

\medskip
Since $\chi$ is real-valued and exceptional, we must have that $e$ is even, so now $\chi(1)$ is odd.

\medskip
Let $\theta$ be the sum of the exceptional characters in the principal $p$-block. Then we have 
$\theta(x) = 1$ for every generator $x$ of $P$, (note that $\theta(1) = (p-e)\left( \frac{p-1}{e}\right)
\equiv 1$ (mod $p$)). Since  $\chi(1)$ is odd, we know that $\chi(t) \neq 0.$ Furthermore all exceptional characters take the same value at $t$.

\medskip
We claim that some conjugate of $t$ lies in $N_{G}(P).$ If this is not the case, then we 
have $$\sum_{ \mu \in {\rm Irr}(B)} \frac{\mu(t)^{2} \mu(x)}{\mu(1)} = 0$$ by the standard Burnside-Frobenius character count  of the number of times $x^{-1}$ is expressible as a product of two conjugates of $t$, where $B$ denotes the principal $p$-block of $G$ (which we now know to be the only $p$-block of positive defect).

\medskip
The contribution to the sum from the trivial character and the exceptional characters is 
$$1 + \frac{\chi(t)^{2}}{p-e}  >1 $$ since $\chi(t) \neq 0.$ However, the non-trivial non-exceptional characters all have degree congruent to $\pm 1$ (mod $p$), so all of these have degree at least $p-1$ (and all take value $\pm 1$ at $x$).

\medskip
Certainly if we sum $\alpha(t)^{2}$ over the non-trivial non-exceptional irreducible characters in the principal $p$-block, the result is at most $$|C_{G}(t)| -1 \leq p-4.$$

\medskip
Hence $$\sum_{ \alpha} \frac{\alpha(t)^{2} |\alpha(x)|}{\alpha (1)} \leq \frac{p-4}{p-1} < 1$$ where $\alpha$ runs through the non-trivial non-exceptional characters in the principal $p$-block.

\medskip
Now we have  $$\sum_{ \mu \in {\rm Irr}(B)} \frac{\mu(t)^{2} \mu(x)}{\mu(1)} \geq  1 - \frac{p-4}{p-1} >0,$$ a contradiction.

\medskip
We now know that some conjugate of $t$ lies in $N_{G}(P),$ which is a Frobenius group of order $pe$ with a cyclic Hall $p^{\prime}$-subgroup of order $e$. Hence $|C_{G}(t)|$ is divisible by $e$.

\medskip
Since $e$ also divides $(p-1)$ and $$|C_{G}(t)| \leq p-3 < p-1,$$ we obtain 
$$|C_{G}(t)| \leq (p-1-e).$$ Now, however, $$p-e = \chi(1) <  |C_{G}(t)| \leq p-1-e,$$ a contradiction.
Hence $G$ is not simple.

\medskip
Suppose now that $$G = \langle t \rangle K \neq K$$ with $K$ simple of order divisible by $p.$

\medskip
Now $G$ has exactly two linear characters (both realizable over $\mathbb{R}$), and by Lemma \ref{invcount} we have 
$$[G:C_{G}(t)] \leq 1 + \sum_{\{ \chi \in {\rm Irr}(G): \nu(\chi) = 1  \neq \chi(1)\}}\chi(1).$$
 Hence we certainly have $$[G:C_{G}(t)] \leq \frac{|G|}{d}$$ where $d$ is the minimal degree of a non-linear irreducible character $\chi$ of $G$ such that $\nu(\chi) = 1.$ This yields $$\chi(1) = d \leq  |C_{G}(t)| \leq p-3.$$ Note that $\chi$ is faithful.

\medskip
As before, by a Theorem of Feit, \cite{Feit} (and using simplicity of $K$),  we know that $|P| = p$.

\medskip
Arguing as in the simple case, we have $C_{G}(P) = P$ and $\chi(1) \in \{e,p-e\}$ with $e < \frac{p-1}{2},$ and we can't have $\chi(1) = e$ by \cite{FT} again. Again, $e$ is even and $\chi(1) = p-e$ is odd.

\medskip
Furthermore, ${\rm Res}^{G}_{K}(\chi)$ is irreducible, for otherwise, it decomposes as a sum of two irreducible characters of equal degree, contrary to the fact that $\chi(1)$ is odd.

\medskip
Now we must also have $e = [N_{K}(P):C_{K}(P)]$ since ${\rm Res}^{G}_{K}(\chi)$ must be an exceptional character in the principal $p$-block of $K$. This yields a contradiction because $G = KN_{G}(P)$ and 
we  have $$2 = [G:K] = [N_{G}(P):N_{K}(P)] = \frac{[N_{G}(P):C_{G}(P)]}{[N_{K}(P):C_{K}(P)]}= 1,$$ a contradiction (we have used the fact that $C_{G}(P) = C_{K}(P) = P).$

\medskip
Now we have reduced to the case that $G = KD$ where $D$ is dihedral of order $2p = [G:K]$ (and $K$ is non-Abelian simple of order not divisible by $p$).

\medskip
Then since $G/K \cong D$, we know that $G$ has exactly $\frac{p+3}{2}$ irreducible characters with $K$ in their kernel (all real-valued) and the sum of their degrees is $p+1.$ All other irreducible characters of $G$ are faithful. Then (again by Lemma \ref{invcount}) we have $$[G:C_{G}(t)] \leq p + \frac{|G|- (p+1)}{d} < p +\frac{|G|}{d},$$ where $d$ is the minimal degree of a faithful real-valued irreducible character of $G$.

\medskip
If $d < p,$ then $P$ centralizes a $P$-invariant Sylow $q$-subgroup of $K$ for every odd prime $q$
by the Hall-Higman-Shult theorem, so that $K = RC_{K}(P)$ for a $P$-invariant Sylow $2$-subgroup $R$ of $K$.
Then $$[K,P] \leq R$$ is a (normal) $2$-subgroup of $K$, which forces $[K,P] = 1$  by the simplicity of $K.$
But in this case, $F^{\ast}(G) = K,$ a contradiction. Hence $d \geq p.$ If $d \geq 2p-6,$ then we obtain
$$d \geq 2|C_{G}(t)|$$ and $$\frac{2|G|}{d} < p + \frac{|G|}{d}.$$ Then  $$p > \frac{|G|}{d},$$ and $$d^{2} >|G|,$$ a contradiction, since $d$ is the degree of an irreducible character of $G$. Hence $$p \leq d < 2p-6.$$

\medskip
Now we have $$\frac{|G|}{d-3} \leq [G:C_{G}(t)] \leq p + \frac{|G|- (p+1)}{d}.$$
Hence $$\frac{3|G|}{d(d-3)} \leq \frac{dp-p-1}{d}$$ so $$|G| < \frac{(d-3)(d-1)p}{3} < \frac{4p^{3}}{3}.$$
Thus $$|K| < p^{2}.$$ 

\medskip
However, $K$ has a $P$-invariant Sylow $q$-subgroup $Q$ for each of its prime divisors $q.$ If $|Q| < p,$ then 
$[P,Q] = 1.$ Since $|K| < p^{2},$ there is at most one prime $q$ such that $|Q| > p.$ 
If there is no such $q$, then we have $[K,P]= 1,$ a contradiction. If there is only one such $q$ with corresponding $P$-invariant Sylow $q$-subgroup $Q$, then we have $$K = QC_{K}(P),$$ so that $$[K,P] \leq Q$$ is a normal $q$-subgroup of $K$. Since $K$ is simple, we have the contradiction $[K,P] = 1.$ 

The proof of Theorem 6 is now complete.
\end{proof}

To conclude this section, we mention another result of a similar nature with elementary proof. We prove that when $G$ is a finite group of even order and $N$ is a non-trivial nilpotent subgroup of $G,$ then either 
there is an element $x \in N^{\#}$ with  relatively large centralizer, or else $|N|$ is not too much larger than the minimum value of $|C_{G}(t)|$, as $t$ ranges through involutions of $G$.

\begin{theorem} \label{bigorsmall}
Let $G$ be a finite group of even order and let $t$ be an involution of $G$. Let $N$ be a non-trivial nilpotent subgroup of $G$. Suppose that $|C_{G}(x)|^{3} < |G|$ for all $x \in N^{\#}.$ 
Then :

\medskip
\noindent i) If $N$ is Abelian, we have $|N| < \sqrt{2}|C_{G}(t)|.$

\medskip
\noindent ii) If $N$ is non-Abelian, we have  $$|N| < \sqrt{\frac{2|Z(N)|-1}{|Z(N)|-1}}|C_{G}(t)|.$$ 
\end{theorem}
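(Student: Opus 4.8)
The plan is to prove this by a Brauer--Fowler-type counting argument, disposing of the Abelian case (i) first and reducing the general nilpotent case (ii) to it by passing to $Z(N)$.

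Assume first that $N$ is Abelian, so that $N\le C_G(x)$ for every $x\in N^\#$; hence $|N|\le|C_G(x)|$, and the hypothesis $|C_G(x)|^3<|G|$ immediately gives $|N|^3<|G|$, so that every element of $N^\#$ lies in a $G$-conjugacy class of size exceeding $|N|^2$. Two easy remarks narrow things down: (a) if $t$ centralises $N$ --- in particular if $t\in N$ --- then already $|N|\le|C_G(t)|$, so we may assume $C_N(t)=N\cap C_G(t)$ is a proper subgroup of $N$; (b) the $N$-conjugates of $t$, i.e.\ $\{t^n:n\in N\}$, form a set of $[N:C_N(t)]$ distinct involutions all lying in $t^G$, so that $|N|\le|C_N(t)|\cdot[G:C_G(t)]\le|C_N(t)|\cdot|I(G)|$, where $I(G)$ denotes the set of involutions of $G$.

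The combinatorial heart is a double count of the pairs of involutions of $G$ that lie in a common coset of $N$. The two inputs I would use are elementary: first, if a coset $gN$ of the Abelian group $N$ contains an involution $u$, then the involutions of $G$ inside $gN$ are exactly the elements $un$ with $n$ in the subgroup $N_-(u):=\{n\in N:u^{-1}nu=n^{-1}\}$ of $N$; secondly, for a fixed $n\in N^\#$ the involutions of $G$ that invert $n$ all lie in a single right coset of $C_G(n)$, hence number at most $|C_G(n)|<|G|^{1/3}$. Counting the pairs one way produces a quantity controlled from below using remark (b) together with $|t^G|\le|I(G)|$; counting them the other way produces a quantity bounded above by a multiple of $\sum_{n\in N^\#}|C_G(n)|<(|N|-1)|G|^{1/3}$. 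Feeding in $|N|^3<|G|$ and simplifying should yield $|N|^2<2|C_G(t)|^2$, i.e.\ $|N|<\sqrt2\,|C_G(t)|$.

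For the non-Abelian case (ii) I would run the same scheme, now using that $N_-(u)$ is a union of cosets of the subgroup $Z(N)\cap N_-(u)$ and applying the Abelian-case estimates to the Abelian subgroup $Z=Z(N)$ (the hypothesis being inherited by $Z^\#\subseteq N^\#$); tracking the resulting $|Z(N)|$-dependent loss through the count is what turns the constant $2$ into $(2|Z(N)|-1)/(|Z(N)|-1)$, which tends to $2$ as $|Z(N)|\to\infty$, consistently with (i). I expect the real difficulty to be extracting these \emph{precise} constants rather than the overall strategy: each direction of the double count naturally gives a bound of the shape $|N|^2\le c\,|C_G(t)|^2$ with $c$ slightly larger than the target, and sharpening $c$ requires being careful about exactly which cosets of $N$ meet $I(G)$, about the evenness of $|C_G(t)|$, about the strictness of the inequality $|N|^3<|G|$, and about the degenerate configuration in which very few cosets of $N$ contain any involutions (which includes the case $C_N(t)=1$).
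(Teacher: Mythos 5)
Your strategy is genuinely different from the paper's, and it has a gap at its quantitative core. The paper never counts involutions in cosets of $N$: it bounds $k(G)$ from above by splitting $\mathrm{Irr}(G)$ into the characters vanishing identically on $N^{\#}$ (each such $\chi$ has $\chi(1)$ divisible by $|N|$, so there are fewer than $|G|/|N|^{2}$ of them) and the remaining characters, of which there are at most $|C_{G}(x)|$ for $x\in N^{\#}$ of largest centralizer --- this last count combines column orthogonality, $\sum_{\chi}\sum_{n\in N^{\#}}|\chi(n)|^{2}=\sum_{n\in N^{\#}}|C_{G}(n)|\leq(|N|-1)|C_{G}(x)|$, with the non-elementary input from \cite{GRR3} that $\sum_{n\in N^{\#}}|\chi(n)|^{2}\geq|N|-\mu(1)^{2}$ for every $\chi$ not vanishing identically on $N^{\#}$. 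Feeding $k(G)\leq|C_{G}(x)|+|G|/|N|^{2}$ into the inequality ${\rm cp}(G)>1/|C_{G}(t)|^{2}$ from \cite{GR} gives both parts at once, with the constant in (ii) coming precisely from $\mu(1)^{2}\leq[N:Z(N)]$.

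The gap in your double count is the lower-bound side. Your upper bound is fine: the number of ordered pairs of distinct involutions in a common coset of $N$ equals $\sum_{n\in N^{\#}}i_{-}(n)$ with $i_{-}(n)\leq|C_{G}(n)|$. But the only general lower bound on $\sum_{C}m_{C}(m_{C}-1)$ in terms of $|I(G)|$ and the cosets is Cauchy--Schwarz, namely $|I(G)|^{2}|N|/|G|-|I(G)|$, and this is \emph{negative} exactly in the extremal example: for $G={\rm SL}(2,2^{a})$, $q=2^{a}$, with $N$ the split torus of order $q-1$, one has $|I(G)|=q^{2}-1<q^{2}+q=[G:N]$, so fewer than one involution per coset on average, while the upper bound $\sum_{n\in N^{\#}}|C_{G}(n)|=(q-2)(q-1)$ is attained with equality (all the colliding pairs sit in the single coset $wN\subseteq N_{G}(N)$). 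So in the very case that pins down the constant, the double count carries no information, and no care with "which cosets meet $I(G)$" or with strictness of inequalities can rescue it: you would need an a priori reason for involutions to concentrate in few cosets of $N$, which is exactly what the generalized-character inequality of \cite{GRR3} supplies and what pure counting does not. A secondary problem: in (ii) your plan of "applying the Abelian-case estimates to $Z(N)$" can only bound $|Z(N)|$, whereas the theorem bounds $|N|$; and for non-Abelian $N$ the set $N_{-}(u)$ is no longer a subgroup, so even the structural description of $I(G)\cap uN$ you rely on breaks down.
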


\begin{proof}  The argument is analogous to one used in \cite{GR} with a Sylow $p$-subgroup in the role of $N,$ but here we may take advantage of the results of \cite{GRR3}. Choose an element $x \in N^{\#}$ so that $|C_{G}(x)|$ is as large as possible. Note that $|C_{G}(x)| \geq |N|$.

\medskip
\noindent i) If $N$ is Abelian, then by \cite{GRR3}, we have $$\sum_{n \in N^{\#}}|\chi(n)|^{2} \geq |N|-1$$ for any irreducible character $\chi$ of $G$ which does not vanish identically on $N^{\#}.$ It follows easily from the orthogonality relations that the number of irreducible characters of $G$ which do not vanish identically on $N^{\#}$ is at most $|C_{G}(x)|.$

\medskip
On the other hand, any irreducible character $\chi$ of $G$ which vanishes identically on $N^{\#}$ has degree divisible by $|N|$, so there are fewer than $\frac{|G|}{|N|^{2}}$ such characters.

\medskip
Since we are assuming that $|G| \geq |C_{G}(x)|^{3},$ we have 
$$\frac{1}{|C_{G}(t)|^{2}} < {\rm cp}(G) \leq  \frac{1}{|C_{G}(x)|^{2}} + \frac{1}{|N|^{2}} \leq \frac{2}{|N|^{2}},$$ and i) follows.

\medskip
\noindent ii) This is similar to the proof of i), except that (using the results of \cite{GRR3}), the number of irreducible characters of $G$ which do not vanish identically on $N^{\#}$ is at most 
$$\frac{|Z(N)||C_{G}(x)|}{|Z(N)|-1},$$ since whenever $\chi$ is an irreducible character of $G$ which does not vanish identically on $N^{\#},$  we have $$\sum_{n \in N^{\#}}|\chi(n)|^{2} \geq |N|-\mu(1)^{2},$$ where $\mu$ is an irreducible character of maximal degree of $N,$ and we always have $$\mu(1)^{2} \leq [N:Z(N)].$$ 

\end{proof}

We note that when $$G = {\rm SL}(2,2^{a}),$$ there is an Abelian subgroup $N$ of $G$ of order $2^{a}-1$
such that $$|C_{G}(x)|^{3} = |N|^{3} < |N|(|N|+1)(|N|+2) = |G|$$ for each element $x \in N^{\#}$ and we have $$|N| = |C_{G}(t)|-1$$ for each involution $t \in G.$

This demonstrates that the bound given in the theorem cannot 
be improved in general to  $$|N| < \alpha |C_{G}(t)|$$ for any fixed constant $\alpha < 1.$

\end{document}